\newtheorem{lemma}{Lemma}[section]
\newtheorem{theorem}{Theorem}[section]
\newtheorem{corollary}{Corollary}[section]
\newtheorem{example}{Example}[section]
\newtheorem{remark}{Remark}[section]
\newtheorem{assumption}{Assumption}[section]
\def\section{\@startsection{section}{1}%
\z@{1\linespacing\@plus\linespacing}{1\linespacing}%
{\bf\centering}}
\def\subsection{\@startsection{subsection}{0}%
\z@{\linespacing\@plus\linespacing}{\linespacing}%
{\bf}}
\DeclareMathOperator{\sgn}{sgn}
\DeclareMathOperator{\Dom}{Dom}
\DeclareMathOperator{\Spec}{Spec}
\DeclareMathOperator{\var}{var}
\DeclareMathOperator{\bd}{b}
\DeclareMathOperator{\uppp}{U}
\DeclareMathOperator{\low}{L}
\providecommand{\pro}[1]{(#1_t)_{t \geq 0}}
\providecommand{\proo}[1]{(#1_t)_{t \in \R}}
\providecommand{\seq}[1]{(#1_n)_{n\in \mathbb{N}}}
\providecommand{\semi}[1]{\{#1_t: t \geq 0\}}
\newcommand{\ud}{\, \mathrm{d}}
\newcommand{\cL}{\mathcal{L}}
\newcommand{\cR}{\mathcal{R}}
\newcommand{\cK}{\mathcal{K}}
\newcommand{\cF}{\mathcal{F}}
\newcommand{\cB}{\mathcal{B}}
\newcommand{\cC}{\mathcal{C}}
\newcommand{\R}{\mathbb{R}}
\newcommand{\sS}{\mathbf{S}}
\newcommand{\1}{\mathbf{1}}
\newcommand{\pr}{\mathbb{P}}
\newcommand{\qr}{\mathbb{Q}}
\newcommand{\ex}{\mathbb{E}}
\newcommand{\Rd}{\mathbb{R}^d}
\newcommand{\N}{\mathbb{N}}
\begin{document}
\title
{Typical long time behaviour of ground state-transformed jump processes}
\author{Kamil Kaleta and J\'ozsef L{\H o}rinczi}
\address{Kamil Kaleta, Faculty of Pure and Applied Mathematics
\\ Wyb. Wyspia{\'n}skiego 27, 50-370 Wroc{\l}aw, Poland}
\email{kamil.kaleta@pwr.edu.pl}

\address{J\'ozsef L\H orinczi,
Department of Mathematical Sciences, Loughborough University \\
Loughborough LE11 3TU, United Kingdom}
\email{J.Lorinczi@lboro.ac.uk}

\begin{abstract}
We consider a class of L\'evy-type processes derived via a Doob-transform from L\'evy processes
conditioned by a control function called potential. These processes have position-dependent and generally
unbounded components, with stationary distributions given by the ground states of the L\'evy generators
perturbed by the potential. We derive precise lower and upper envelopes for the almost sure long time
behaviour of these ground state-transformed L\'evy processes, characterized through escape rates and
integral tests. We also highlight the role of the parameters by specific examples.

\bigskip
\noindent
\emph{Key-words}: fluctuations of jump processes, stationary distributions, Feynman-Kac semigroups,
non-local Schr\"odinger operators, potentials, ground states

\bigskip
\noindent
2010 {\it MS Classification}: Primary 47D08, 60G51; Secondary 47D03, 47G20
\end{abstract}

\thanks{Research of K. Kaleta was supported by the National Science Center, Poland, grant 2015/17/B/ST1/0123.}

\maketitle

\baselineskip 0.55 cm


\section{Introduction}

Given a random process $\pro X$, a fundamental question is what is its typical sample path behaviour on the long run.
This generally involves a statement of the form
\begin{equation}
\liminf_{t\to\infty} \frac{X_t}{\tau_1(t)} = C_1 \quad \mbox{and} \quad \limsup_{t\to\infty} \frac{X_t}{\tau_2(t)} = C_2,
\qquad \mathbb P-{\rm a.s.},
\label{LIL}
\end{equation}
where $\tau_1, \tau_2$ are positive functions on the positive semi-axis, $C_1, C_2$ are finite non-zero constants, and
$\mathbb P$ is the probability measure of the process. The functions $\tau_1, \tau_2$ provide lower and upper almost sure
envelopes, and thus give a characterization of the time-scale on which the process typically evolves in the long time limit.

In the present paper our aim is to consider this problem for a large class of L\'evy-type jump processes obtained from L\'evy
processes conditioned by Kato-class potentials, assuming that the so obtained processes have a stationary distribution. Such
processes arise from the Feynman-Kac representation of non-local Schr\"odinger operators of the form $H = -L +V$, where $L$
is the $L^2$-generator of a L\'evy process $\pro X$ on a suitable probability space, and $V$ is a multiplication operator
called potential. This representation reads
\begin{equation}
\label{FKf}
\left(e^{-tH}f\right)(x) = \ex^x[e^{-\int_0^t V(X_s)ds}f(X_t)], \quad f \in L^2(\R^d), \; x \in \R^d, \; t \geq 0,
\end{equation}
where the expectation is taken with respect to the probability measure of the process $\pro X$. Since the semigroup defined
by the right hand side is not measure preserving, using the ground state (i.e., eigenfunction at the bottom of the spectrum)
$\varphi_0$ of $H$ one can change the space $L^2(\R^d)$ to the weighted Hilbert space $L^2(\R^d,\varphi_0^2dx)$ on which the
correspondingly transformed semigroup becomes a Markov semigroup, and thus by a change of measure the right hand side in
\eqref{FKf} turns into an expectation with respect to a random process $\pro {\widetilde X}$ derived from $\pro X$ (for further
details see Section 2 below). We call such processes \emph{ground state-transformed (GST) processes}, which are thus a case
of Doob $h$-transformed processes, where the function $h$ is $\varphi_0$. The properties of this process will then be relevant
in a probabilistic study of the semigroup $\{e^{-tH}: t \geq 0\}$.

The ground state-transformed processes $\pro {\widetilde X}$ make a class of independent interest, even when the above
relevance is ignored. The generator of $\pro {\widetilde X}$ is
\begin{eqnarray}
(\widetilde H f)(x)
&=&
-\frac{1}{2} \sigma\nabla \cdot \sigma\nabla f(x) - \sigma\nabla\ln\varphi_0(x)\cdot
\sigma\nabla f(x) \nonumber \\
&& \qquad - \int_{0<|z|\le 1} \frac{\varphi_0(x+z)-\varphi_0(x)}{\varphi_0(x)} z\cdot \nabla f(x)\nu(z)dz \\
&& \qquad
-\int_{\R^d\setminus\{0\}}\big( f(x+z)-f(x)-z\cdot\nabla f(x)1_{\{{|z|\le 1}\}} \big)\frac{\varphi_0(x+z)}{\varphi_0(x)}
\nu(z)dz, \nonumber
\label{eq: generator}
\end{eqnarray}
where $\nu$ is the L\'evy intensity and $A = \sigma\sigma^T$ is the {diffusion matrix} of $\pro X$, and where we use the
notation $\sigma\nabla \cdot \sigma\nabla f(x) =\sum_{i,j=1}^d (\sigma\sigma^T)_{ij}\partial_{x_i} \partial_{x_j} f(x)$.
Under suitable conditions (see a discussion in \cite{LY}), the GST process satisfies a stochastic differential equation
with jumps of the form
\begin{eqnarray}
\label{sde}
\widetilde X_t
&=&
\widetilde X_0 + {\sigma} B_t + \int_0^t \sigma\nabla\ln \varphi_0(\widetilde X_s)\,ds
+ \int_0^t\int_{|z|\le 1}\frac{\varphi_0(\widetilde X_{s}+z)-\varphi_0(\widetilde X_{s})}{\varphi_0(\widetilde X_{s})} z\nu(z)dzds
 \nonumber \\
&& \quad
+\int_0^t\int_{|z|\le 1} \int_0^{\infty} z 1_{\left\{v\le \frac{\varphi_0(\widetilde X_{s-}+z)}{\varphi_0(\widetilde X_{s-})}\right\}}
\widetilde N(ds, dz, dv) \\
&& \quad
+\int_0^t \int_{|z|>1} \int_0^{\infty} z 1_{\left\{v\le \frac{\varphi_0(\widetilde X_{s-}+z)}{\varphi_0(\widetilde X_{s-})}\right\}}
N(ds, dz, dv) \nonumber,
\end{eqnarray}
where $\pro B$ is standard Brownian motion, $N$ is a Poisson random measure on $[0,\infty)\times\R^d\times[0,\infty)$ with intensity
$dt\nu(z)dz dv$, and $\widetilde N$ is the related compensated Poisson measure.

From the above two observations it is seen that the potential $V$ perturbing the L\'evy process enters the GST process via the
ground state $\varphi_0$ of the operator $H$, and in general gives rise to a position-dependent drift and a position-dependent bias in
the jump kernel,
i.e., a L\'evy-type process. Such processes are currently much researched on various levels of generality \cite{Sch,Sa,K}. Our class of
GST processes has the advantage that it has a definite structure while being a rich class, and its analysis depends on the properties
of a control function $V$ through $\varphi_0$. Also, from the expression in terms of the SDE above we note that GST processes have
unbounded coefficients, while most results on L\'evy-type processes have been established so far for bounded coefficients only (i.e.,
for cases when the symbol of the generator is uniformly bounded with respect to the position $x$ in space). Our goal in this paper is
to describe the profile function $\tau$ and the constant $C$ in function of the properties of $L$ and $V$.

The long term behaviour for the free processes, i.e., when the potential $V \equiv 0$, is described by classic results. When $\pro X$
is an $\R^d$-valued Brownian motion, Khinchin's law of iterated logarithm (LIL) says that \cite{Kha} the common envelope is described
by
$$
\tau(t) = \sqrt{2t \log\log t} \quad \mbox{and} \quad C=1.
$$
There is a rich literature on related results (e.g., the running maximum, characterization of limit points, local times, other
functionals of Brownian motion and random walk, large deviations, and similar problems on the typical short time behaviour, etc),
for some standard summaries see, e.g., \cite{CsR,R,Fr}.

This behaviour becomes very different in the case of heavy-tailed  purely jump processes.  Khinchin has also shown
\cite{Khb} (see important improvements in \cite{F,B}) that for non-Gaussian stable processes no similar LIL holds
in a very severe sense. If $\pro X$ is an isotropic $\alpha$-stable process with $0 < \alpha < 2$, then $C$ is either
zero or infinite for any positive increasing function $\tau$ on the positive semi-axis, according to whether
$\int_1^\infty \tau(t)^{-\alpha}dt$ is finite or infinite.  In contrast, for a real-valued L\'evy process $\pro X$
having a finite variance and zero mean Gnedenko proved \cite{G} that $\tau$ is the same as for Brownian motion and
$C = \sqrt{\var X_1}$. For processes which are spectrally one-sided or contain stable components etc see \cite{Z,B,PT},
and a standard modern summary is \cite{S}. For more recent results using Dirichlet forms see \cite{Sh,SW}. We note that
there is a large literature on short time LIL-type behaviour of L\'evy jump processes, however, the $t\downarrow 0$ limit
is beyond the scope of our paper.

Loosely speaking, the above results indicate that for a symmetric process the structure of the almost sure long time
profile $\tau$ is determined by the standard deviation and a small margin given by a slowly varying correction factor.
This margin can be further refined by integral tests.
Recall that $\tau$ is said to be in the upper resp. lower class at infinity with respect to $\pro X$ whenever
$\mathbb P(X_t < \tau(t) : \mbox{as $t \to \infty$})$ is 1 or 0. For Brownian motion, the so called Kolmogorov-Petrovsky
integral test says that if $g$ is a positive increasing function, then
$$
\mathbb P\left(|B_t| \leq \sqrt t g(t): \mbox{as $t \to \infty$}\right) = \; 0  \;\; \mbox{or} \;\; 1
$$
according to $\int_1^\infty \frac{g^d(t)}{t}e^{-\frac{g^2(t)}{2}}dt$ being finite or infinite. Also, the
Dvoretzky-Erd\H{o}s integral test says that if $h$ is a positive function, decreasing to zero, and $d \geq 3$,
then
$$
\mathbb P\left(|B_t| \geq \sqrt t h(t): \mbox{as $t \to \infty$} \right) = \; 0  \;\; \mbox{or} \;\; 1
$$
as $\int_1^\infty \frac{h^{d-2}(t)}{t}dt$ is finite or infinite. In particular, it follows that for some $n \in
\mathbb N$ and $d=3$,
$$
\tau(t) = \sqrt{2t\left(\log_2t + \frac{3}{2}\log_3t + \log_4t + ... + \log_{n-1}t +\left(1+\varepsilon\right) \log_n t\right)}
$$
where $\log_n$ means $n$-fold iterated logarithm, is in the upper or lower class at infinity, if $\varepsilon$ positive or
negative, respectively. For further integral tests related to Brownian motion and some jump processes we refer to \cite{Sh,ShN,K}.

The problem of long time behaviour has also been addressed for diffusions. In the works \cite{A1,A2} conditions have been
obtained for diffusions defined by stochastic differential equations such that the solutions continue to obey a LIL behaviour;
see also the classic paper by Motoo \cite{Mo}, and \cite{Ma} and the references therein. For GST processes obtained by
conditioning Brownian motion, Rosen and Simon \cite{RS} considered polynomial potentials increasing to infinity at infinity
and diffusions generated by the Schr\"odinger operator $-\frac{1}{2}\Delta + V$. They showed that if the degree of this polynomial
is $2m \geq 2$, and the coefficient of the leading term  is $a_{2m} > 0$, then the a.s. long-time profile of the GST process (called
by the authors $P(\phi)_1$-process)
is
$$
\tau(t) = (\log t)^\frac{1}{m+1} \quad \mbox{and} \quad C = 1/a_{2m}^{2(m+1)}.
$$
In \cite{BL}, more generally, Kato-class potentials $V$ were considered to study the support of Gibbs measures on Brownian paths,
see also \cite{LHB11}. Here it is shown that whenever the Schr\"odinger operator has a ground state $\varphi_0 \in L^1(\R^d)
\cap L^2(\R^d)$ and a spectral gap $\Lambda$, then the profile function of the so obtained two-sided diffusion is determined by
the condition
$$
\frac{e^{-\Lambda |t|}}{\varphi_0(X_t)} \to 0 \quad \mbox{as} \quad |t|\to\infty,
$$
from which explicit expressions can be derived for specific (classes of) examples. While this result has the advantage to
deal with a large class of potentials, it overestimates $\tau$ to large or small degrees dependent on $V$.

Long time behaviour for ground state-transformed jump L\'evy processes has been explored only for isotropic stable processes
so far, in the context of the fractional Laplacian $(-\Delta)^{\alpha/2}$, $0 < \alpha < 2$, see \cite{KL12}. In this paper
we go far beyond this class. Our main results are as follows. First we present an integral test for GST processes derived
from a general underlying L\'evy process conditioned by a general Kato-class potential (Theorem \ref{thm:int_test_1} and
Corollary \ref{cor:int_test_1} below). This will be achieved in terms of a functional directly featuring the ground state
(escape rates), to which we will be able to use the detailed information on their decay/concentration properties recently
obtained in \cite{KL15,KL17}. Next we restrict to a subclass of jump processes for which multiple large jumps are dominated
by single large jumps (which we call jump-paring L\'evy processes), and split the discussion to confining potentials ($V$
increasing to infinity at infinity) and decaying potentials ($V$ decreasing to zero at infinity), allowing us to get sharp
characterisations of the time evolution envelopes. For confining potentials we present an integral test in Theorem
\ref{thm:LIL_reg_var} and its implication on the long time behaviour in Corollary \ref{cor:LIL_reg_var}, and a similar pair
of results for decaying potentials in Theorem \ref{thm:LIL_reg_var_dec} and Corollary \ref{cor:LIL_reg_var_dec}. We refine
even further by assuming regular variation in Theorems \ref{thm:reg_prof}-\ref{thm:reg_prof_slow} in the case of
confining potentials, and slow variation in Corollaries \ref{cor:reg_prof_dec}-\ref{cor:reg_prof_slow_dec} in the case
of decaying potentials. We also prove the intuition that a faster decaying potential should imply tighter long time evolution
profiles (Theorem \ref{thm:monot}), and illustrate all these results by specific examples (Section 4.4) highlighting the
interplay of the L\'evy intensity and the potential in determining the growth of paths.

\section{The underlying and the ground state-transformed processes}
\subsection{Symmetric jump-paring L\'evy processes}

Let  $(X_t)_{t \geq 0}$ be a symmetric, $\R^d$-valued, $d \geq 1$, L\'evy process on a suitable probability space. We
use the notations $\pr^x$ and $\ex^x$ for the probability measure and expected value of the process starting in $x
\in \Rd$, respectively. The process $(X_t)_{t \geq 0}$ is determined by the characteristic function
$$
\ex^0 \left[e^{i \xi \cdot X_t}\right] = e^{-t \psi(\xi)}, \quad \xi \in \R^d, \ t>0,
$$
with exponent given by the L\'evy-Khintchin formula
\begin{align} \label{eq:LChE}
\psi(\xi) = A \xi \cdot \xi + \int_{\R^d} (1-\cos(\xi \cdot z)) \nu(dz).
\end{align}
Here $A$ is a symmetric non-negative definite $d \times d$ matrix, and $\nu$ is a symmetric L\'evy measure on $\R^d
\backslash \left\{0\right\}$, i.e., $\int_{\R^d} (1 \wedge |z|^2) \nu(dz) < \infty$ and $\nu(E)= \nu(-E)$, for all measurable
$E \subset \R^d \backslash \left\{0\right\}$, thus the L\'evy triplet of the process is $(0,A,\nu)$. We assume throughout that
the L\'evy measure is an infinite measure and it is absolutely continuous with respect to Lebesgue measure with density (L\'evy
intensity) $\nu(x) > 0$, i.e.,
\begin{align} \label{eq:nuinf}
\nu(\R^d \backslash \left\{0\right\})=\infty \quad \text{and} \quad \nu(dx)=\nu(x)dx.
\end{align}
When $A \equiv 0$ and $\nu \neq 0$, the L\'evy process $\pro X$ is a purely jump process, when $\nu \equiv 0$ and and $A \neq 0$,
it is purely continuous. Recall that $\pro X$ is a Markov process with respect to its natural filtration, satisfying the strong
Markov property and having c\`adl\`ag paths. Moreover, under \eqref{eq:nuinf} the process has the strong Feller property,
i.e., its transition semigroup satisfies $P_t(L^{\infty}(\R^d)) \subset C_{\rm b}(\R^d)$, for all $t>0$.
Equivalently, the one-dimensional distributions of $\pro X$ are absolutely continuous with respect to Lebesgue measure,
 i.e., there exist the transition probability densities $p(t,x,y) =p(t,y-x,0) =: p(t,y-x)$.
Its infinitesimal generator $L$ is uniquely determined by its Fourier symbol
\begin{align} \label{def:gen}
\widehat{L f}(\xi) = - \psi(\xi) \widehat{f}(\xi), \quad \xi \in \R^d, \; f \in \Dom L,
\end{align}
with domain $\Dom L=\left\{f \in L^2(\R^d): \psi \widehat f \in L^2(\R^d) \right\}$. It is a negative non-local self-adjoint
operator such that
$$
L f(x) = \sum_{i,j=1}^d a_{ij} \frac{\partial^2 f}{\partial x_j \partial x_i} (x)
+ \int \left(f(x+z)-f(x)-\1_{B(0,1)}(z)z \cdot \nabla f(x)\right) \,\nu(z) dz, \quad x \in \R^d,
$$
for $f \in C_0^{\infty}(\R^d)$. For more details on L\'evy processes we refer to \cite{S, B}.


In what follows we will also consider a more restricted class of symmetric L\'evy processes defined by a condition on the large
jumps, introduced in \cite{KL12}. Recall the following standard notations. For given functions $f, g$ the notation $f \asymp C g$
means that $C^{-1} g \leq f \leq Cg$ with a constant $C$, and $f \asymp g$ means that there is a constant $C$ such that this
relation holds. Also, we write $f \approx g$ when $\lim_{r \to \infty} f(r)/g(r) = 1$. The constants will be assumed to be
dependent on the dimension $d$ by default, while dependence of $C$ on the process $\pro X$ will indicated by $C(X)$.

\begin{assumption}
\label{ass:assnu}
The following conditions hold:
\begin{enumerate}
\item
There exist a non-increasing function $f:(0,\infty) \to (0,\infty)$ and constants $C_1, C_2, C_3 > 0$
such that
\begin{align} \label{eq:ass1}
C_1 f(|x|) \leq \nu(x) \leq C_2 f(|x|), \quad x \neq 0,
\end{align}
and
\begin{align} \label{eq:ass2}
\int_{|y|>1/2, \,  |x-y|>1/2} f(|x-y|) f(|y|) dy \leq C_3 \: f(|x|), \quad |x|\geq 1.
\end{align}
\item
There exist $t_{\rm b}>0$ and
$C_4=C_4(X,t_{\rm b})$ such that $0<p(t_{\rm b},x) \leq C_4$, for all $x \in \R^d$.
\item
For all $0<p<q<R \leq 1$ we have $\sup_{x \in B(0,p)} \sup_{y \in B_q(0)^c} G_{B_R(0)}(x,y) < \infty$,
where $G_{B_R(0)}(x,y) = \int_0^{\infty} p_{B_R(0)}(t,x,y) dt$ denotes the Green function of the process
$\pro X$ in the ball ${B_R(0)}$.
\end{enumerate}
\end{assumption}
\noindent
We refer to the class of L\'evy processes satisfying Assumption \ref{ass:assnu} as \emph{symmetric jump-paring L\'evy
processes}, and to condition \eqref{eq:ass2} as the \emph{jump-paring property}. It means that double (and by iteration,
all multiple) large jumps are stochastically dominated by single large jumps. This condition has been introduced in
\cite{KL15}, for its further uses see also \cite{KS14,KL17}.

\begin{example}
{\rm
The jump-paring class has a non-trivial overlap with subordinate Brownian motions in the sense that neither contains
the other class. Some landmark examples include:
\begin{itemize}
\item[(1)]
isotropic $\alpha$-stable processes, generated by $L = (-\Delta)^{\alpha/2}$, $0 < \alpha < 2$
\item[(2)]
isotropic relativistic $\alpha$-stable processes, generated by $L = (-\Delta + m^{2/\alpha})^{\alpha/2} - m$,
$0 < \alpha < 2$, $m > 0$
\item[(3)]
isotropic geometric $\alpha$-stable processes, generated by $L = \log(1 + (-\Delta)^{\alpha/2})$, $0 < \alpha < 2$
\item[(4)]
jump-diffusion processes obtained as the sum of a mutually independent Brownian motion and an isotropic $\alpha$-stable
process, generated by $L = -a\Delta + b(-\Delta)^{\alpha/2}$, $0 < \alpha < 2$, $a, b > 0$.
\end{itemize}
In contrast, the variance gamma process corresponding to an $\alpha = 2$ geometric stable process does not belong to
the jump-paring class. For a more detailed discussion of special cases and examples we refer to \cite{KL15}.
}
\end{example}

The restricted class of processes given by Assumption \ref{ass:assnu} will be used only in Section 4 below. For the
remainder of this section $\pro X$ denotes a general symmetric L\'evy process corresponding to the L\'evy-Khintchin
exponent (\ref{eq:LChE}).

\subsection{Ground state-transformed processes}

\subsubsection{\rm \textbf{Potentials and Feynman-Kac semigroup.}}
Below we will consider L\'evy processes conditioned by appropriate potentials.
Recall that a Borel measurable function $V: \R^d \to \R$ is an \emph{$X$-Kato class potential} whenever for its
positive and negative parts
\begin{equation}
\label{ass:pots}
V_{-} \in \cK^X \quad \mbox{and} \quad V_+  1_C \in \cK^X \; \mbox{for every compact subset $C \subset \R^d$},
\end{equation}
holds, where $h \in \cK^X$ means that
\begin{align}
\label{eq:Katoclass}
\lim_{t \downarrow 0} \sup_{x \in \R^d} \ex^x \left[\int_0^t |h(X_s)| ds\right] = 0.
\end{align}
\vspace{0.1cm}

By an extension of Khasminskii's lemma \cite[Lem. 3.37]{LHB11} to $X$-Kato potentials, it follows that the random
variables $\int_0^t V(X_s) ds$ are exponentially integrable for all $t \geq 0$, and thus we can define the Feynman-Kac
semigroup
\begin{equation}
T_t f(x) = \ex^x\left[e^{-\int_0^t V(X_s) ds} f(X_t)\right], \quad f \in L^2(\R^d), \ t \geq 0, \ x \in \R^d.
\label{FK}
\end{equation}
Using the Markov property and stochastic continuity of the process $\pro X$ it can be shown that $\{T_t: t\geq 0\}$
is a strongly continuous one-parameter semigroup of symmetric operators on $L^2(\R^d)$. Moreover, by the Hille-Yoshida
theorem there exists a self-adjoint operator $H$ bounded from below such that $e^{-tH} = T_t$. The generator can be
identified as the non-local Schr\"odinger operator $H = - L + V$ defined as a form sum, where $L$ is the infinitesimal
generator of the L\'evy process $\pro X$.

The following will be a basic standing assumption for the whole paper.

\begin{assumption}
\label{ass:gsex}
The potential $V$ is in $X$-Kato class, chosen such that $\lambda_0 := \inf\Spec H \in \R$ is an isolated eigenvalue of
$H$.
\end{assumption}

\noindent
We denote the corresponding eigenfunction (called \emph{ground state}) by $\varphi_0$, i.e.,
$$
H \varphi_0 = \lambda_0 \varphi_0, \;\; \varphi_0 \not \equiv 0, \;\; \varphi_0 \in \Dom H \subset L^2(\R^d)
$$
holds. By standard arguments \cite[Th.XIII.43]{ReS}, \cite[Sect. 3.4.3]{LHB11} it follows that $\varphi_0$ is
unique and has a strictly positive version, which we will use throughout below.

Both from the perspective of existence of a ground state and for the purposes of the discussion below, it is
useful to single out two large classes of potentials.
\begin{example}[\textbf{Confining potentials}]
\label{confipot}
{\rm
A potential $V$ is \emph{confining} if $V(x) \to \infty$ as $|x| \to \infty$. In this case $\Spec H$ is purely discrete,
and a (unique) ground state $\varphi_0$ exists. Some examples include:
\begin{itemize}
\item[(1)]
\emph{Harmonic and anharmonic oscillators:} Let $V(x)= |x|^{2n}$, $n \in \N$. The case $n=1$ describes the potential
of the harmonic oscillator, and $n \geq 2$ give anharmonic oscillators.

\vspace{0.1cm}
\item[(2)]
\emph{Double and multiple well potentials:} The potential $V(x) = |x|^4 - b|x|^2$, $b > 0$, is a symmetric double well
potential. Multiple well potentials can be obtained by higher order polynomials.
\end{itemize}
}
\end{example}

\begin{example}[\textbf{Decaying potentials}]
\label{decaypot}
{\rm
A potential $V$ is \emph{decaying} if $V(x) \to 0$ as $|x| \to \infty$. In this case $\Spec H$ contains the essential spectrum
$\Spec_{\rm ess} H = \Spec_{\rm ess} L = [0,\infty)$, and whether it also contains a non-empty discrete component depends
on further details of $V$. Some decaying $X$-Kato class potentials of special interest in mathematical physics are:
\begin{itemize}
\item[(1)]
\emph{Potential wells:} Let $V(x)= - v(x)$ with a compactly supported, non-negative bounded Borel function $v \not
\equiv 0$. Specifically, we can choose $V(x)=-a \1_{B(0,1)}(bx)$, for $a, b>0$.

\vspace{0.1cm}
\item[(2)]
\emph{Coulomb-type potentials:} Let $f$ in Assumption \ref{ass:assnu} be such that $f(r)=r^{-d-\alpha}$, $r \in (0,1]$,
for some $\alpha \in (0,2)$, and let $V(x)=-(a_1|x|^{-\beta_1} \wedge a_2|x|^{-\beta_2})$, with $\beta_1 \in
(0,\alpha \wedge d]$, $\beta_2 \in [\beta_1, \infty)$ and $a_1, a_2 >0$.

\vspace{0.1cm}
\item[(3)]
\emph{Yukawa-type potentials:}
Let $f$ in Assumption \ref{ass:assnu} be as in (2) above and $V(x)=-(a_1|x|^{-\beta_1} \wedge a_2|x|^{-\beta_2}e^{-b|x|})$,
with $\beta_1 \in (0,\alpha \wedge d]$, $\beta_2 \in [\beta_1, \infty)$ and $a_1, a_2, b >0$.

\vspace{0.1cm}
\item[(4)]
\emph{P\"oschl-Teller potential:} $V(x)= -a/\cosh^2 (b|x|)$, with $a, b > 0$.

\vspace{0.1cm}
\item[(5)]
\emph{Morse potential:} $V(x)= a((1-e^{-b(|x|-r_0)})^2-1)$, with $a, b, r_0 > 0$.
\end{itemize}
}
\end{example}

\subsubsection{\rm \textbf{Ground state-transformed process.}}
By using $\varphi_0$, we define the ground state transform as the unitary map
$$
U: L^2(\R^d,\varphi_0^2dx) \to L^2(\R^d,dx), \quad f \mapsto \varphi_0 f.
$$
Also, we define the intrinsic Feynman-Kac semigroup
\begin{equation}
\label{IFK}
\widetilde{T}_t f(x) = \frac{e^{\lambda_0 t}}{\varphi_0(x)} T_t(\varphi_0 f)(x)
\end{equation}
associated with $\semi T$. Using the integral kernel $u(t,x,y)$ of $T_t$ we have then that
$\widetilde{T}_t f(x) = \int_{\R^d} \tilde u(t,x,y)f(y) \varphi_0^2(y)dy$ with the integral
kernel given by
\begin{equation}
\widetilde{u}(t,x,y) = \frac{e^{\lambda_0 t} u(t,x,y)} {\varphi_0(x)\varphi_0(y)},
\label{kerIFK}
\end{equation}
and infinitesimal generator $\tilde H = U^{-1}(H-\lambda_0)U$, with domain
$$
\Dom \widetilde H = \{f \in L^2(\R^d, \varphi_0^2dx): Uf \in \Dom H \}.
$$
A calculation then shows that $\widetilde H$ is given by the expression \eqref{eq: generator}. Furthermore, the
operators $\widetilde T_t$ are contractions and we have $\widetilde T_t \1_{\R^d} = \1_{\R^d}$ for all $ t \geq 0$,
thus $\semi {\widetilde T}$ is a Markov semigroup on $L^2(\R^d, \varphi_0^2dx)$.

The self-adjoint operator $\widetilde H$ generates a stationary Markov process, which we call a \emph{ground state-transformed
(GST) process}. (In the terminology of \cite{RS} it is called a $P(\phi)_1$-process associated with potential $V$.)
To define GST processes, we need two-sided underlying processes. Denote by $\Omega_{\rm r}$ the space of right continuous
functions from $[0,\infty)$ to $\R^d$ with left limits (i.e., c\`adl\`ag functions), and by $\Omega_{\rm l}$ the space of
left continuous functions from $[0,\infty)$ to $\R^d$ with right limits (i.e., c\`agl\`ad functions). Denote the corresponding
Borel $\sigma$-fields by $\mathcal B(\Omega_{\rm r})$ and $\mathcal B(\Omega_{\rm l})$, respectively. Let $\pro {X^{\rm r}}$ be
a L\'evy process on the space $(\Omega_{\rm r},\mathcal B(\Omega_{\rm r}),\pr^x_{\rm r})$, where $X_t^{\rm r}(\omega) = \omega(t)$
is the coordinate process on $\Omega_{\rm r}$, and let $\pro {X^{\rm l}}$ be a L\'evy process on the space $(\Omega_{\rm l},
\mathcal B(\Omega_{\rm l}),\pr^x_{\rm l})$, where $X_t^{\rm l}(\varpi) = \varpi(t)$ is the coordinate process on $\Omega_{\rm l}$.
Consider the product probability space $(\Omega_{\rm r} \times \Omega_{\rm l}, \mathcal B(\Omega_{\rm r}) \times
\mathcal B(\Omega_{\rm l}), \pr^x_{\rm r} \otimes \pr^x_{\rm l})$, and for every $\hat \omega = (\omega, \varpi) \in
\Omega_{\rm r} \times \Omega_{\rm l}$ define
\begin{align}
 \hat X_t(\hat\omega)
 = \left\{
  \begin{array}{ccc}
    \omega(t) & \;\; \text{\rm if} \; \; t \geq 0,\\
    \varpi(-t)  & \;\; \text{\rm if} \; \; t < 0.
  \end{array}\right.
 \end{align}
Then $t \mapsto  \hat X_t(\cdot)$ is a c\`adl\`ag function for all $t\in \R$. Denote by $\Omega$ the space of c\`adl\`ag functions
$\R \to \R^d$, with Borel $\sigma$-field by $\mathcal B(\Omega)$. Consider the image measure $\qr^x = (\pr^x_{\rm r} \otimes
\pr^x_{\rm l}) \circ \hat X^{-1}_\cdot$. Then the coordinate process $\proo Y$ on $(\Omega, \mathcal B(\Omega),\qr^x)$ is a L\'evy
process such that $\qr^x(Y_0=x) =1$, the increments $(Y_{t_i} - Y_{t_{i-1}})_{1 \leq i \leq n}$ are independent and stationary for
every $0 = t_0 < ... < t_n$, $n \in \N$, the increments $(Y_{-t_{i-1}}-Y_{-t_i})_{1 \leq i \leq n}$ are independent and stationary
for every $0 = - t_0 > ... > - t_n$, $n \in \N$, and the function $\R \ni t \mapsto Y_t(\cdot) \in \R^d$ is $\qr^x$-a.s. c\`adl\`ag.

Using two-sided c\`adl\`ag path space, we can now define GST processes. The following result gives
the existence and fundamental properties of GST processes for general underlying L\'evy processes
and general Kato-class potentials. A first variant for jump processes has been obtained in \cite[Th. 5.1]{KL12} for GST processes
derived from isotropic stable processes, but the argument is generic and it applies directly to the present settings,
see for further details \cite[Th. 2.1]{LY}.
For an initial variant of the concept defined for an underlying Brownian motion and allowing simplifications
due to path continuity we refer to \cite{S2,BL}. For infinite dimensional GST processes we refer to \cite{G1,G2};
see also a detailed discussion in \cite{LHB11}.
\begin{theorem}[\textbf{Ground state-transformed process}]
\label{th:exphi1}
Let $V$ be an $X$-Kato class potential and $\semi{\widetilde T}$ be the corresponding intrinsic Feynman-Kac semigroup. For all $x \in
\R^d$ there exists a probability measure $\widetilde \pr^x$ on $(\Omega, \mathcal B(\Omega))$  and a random process $\proo {\widetilde X}$
satisfying the following properties:
 \begin{itemize}
\item[(1)]
Let $-\infty < t_0 \leq t_1 \leq ... \leq t_n < \infty$ be an arbitrary division of the real line, for any
$n \in \N$. The initial distribution of the process is
$$
\widetilde \pr^x(\widetilde{X}_0 = x) = 1,
$$
and the finite dimensional distributions of  $\widetilde \pr^x$ with respect to the stationary distribution
$\varphi_0^2 dx$ are given by
\begin{align}
\label{eq:fddist}
\int_{\R^d} \ex_{\widetilde \pr^x}\Big[\prod_{j=0}^n f_j(\widetilde{X}_{t_j})\Big] \varphi^2_0(x) dx =
\left(f_0,\: \widetilde{T}_{t_1-t_0}\: f_1 ...\: \widetilde{T}_{t_n-t_{n-1}}\: f_n \right)_
{L^2(\R^d, \varphi_0^2 dx)}
\end{align}
for all $f_0,f_n \in L^2(\R^d, \varphi_0^2dx)$, $f_j \in L^{\infty}(\R^d)$, $j=1,..., n-1$.
\item[(2)]
The finite dimensional distributions are time-shift invariant, i.e.,
$$
\int_{\R^d} \ex_{\widetilde \pr^x}\Big[\prod_{j=0}^n f_j(\widetilde{X}_{t_j})\Big] \varphi^2_0(x) dx =
\int_{\R^d} \ex_{\widetilde \pr^x}\Big[\prod_{j=0}^n f_j(\widetilde{X}_{t_j+s})\Big] \varphi^2_0(x) dx,
\quad s \in \R, \, n \in \N.
$$
\item[(3)]
$(\widetilde{X}_t)_{t \geq 0}$ and $(\widetilde{X}_t)_{t \leq 0}$ are independent, and $\widetilde X_{-t}
\stackrel{\rm d}{=} \widetilde X_t$, for all $t \in \R$.
\item[(4)]
With the filtrations $(\cF_t^{+})_{t \geq 0} = \sigma\{\widetilde{X}_s: 0 \leq s \leq t\}$  and $(\cF_t^{-})_{t \leq 0}
= \sigma\{\widetilde{X}_s: t \leq s \leq 0\}$, the random process $(\widetilde{X}_t)_{t \geq 0}$ is a Markov process
with respect to $\left(\cF_t^{+}\right)_{t \geq 0}$, and $(\widetilde{X}_t)_{t \leq 0}$ is a Markov process with
respect to $\left(\cF_t^{-}\right)_{t \leq 0}$.
\end{itemize}
Furthermore, we have for all $f,g\in L^{2}(\R^d,\varphi_0^2dx)$ the change-of-measure formula
\begin{equation}
(f, \widetilde T_t g)_{L^{2}(\R^d,\varphi_0^2dx)}= (f\varphi_0, e^{-t(H-\lambda_0 )}g\varphi_0)_{L^2(\R^d,dx)}=
\int_{\R^d}\ex_{\widetilde \pr^x}[f(\widetilde X_0) g(\widetilde X_t)]\varphi_0^2{(x)}dx, \quad t \geq 0.
\label{gibbs}
\end{equation}
In particular, we have the path measure
\begin{equation}
\label{pphi}
\widetilde \pr (A)= \int_{\R^d} \ex_{\widetilde \pr^x}\left[1_A \right] \varphi^2_0(x) dx, \quad
A \in \mathcal B(\Omega).
\end{equation}
\end{theorem}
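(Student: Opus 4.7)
The plan is to construct $\widetilde \pr^x$ by the standard Kolmogorov extension procedure applied to the Markov semigroup $\semi{\widetilde T}$ on $L^2(\R^d, \varphi_0^2 dx)$, and then verify all four properties from the semigroup identities, finally assembling a two-sided process via the $\Omega_{\rm r} \times \Omega_{\rm l}$ construction already set up in the surrounding text.

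First, I would note that by the paragraph preceding the theorem, $\widetilde T_t$ is a Markov semigroup with $\widetilde T_t \1 = \1$, and has kernel density $\widetilde u(t,x,y)$ with respect to $\varphi_0^2 dy$. So $\widetilde p(t,x,dy) := \widetilde u(t,x,y) \varphi_0^2(y) dy$ is a probability kernel, and the Chapman--Kolmogorov identity follows from the semigroup property of $\widetilde T_t$. Using these kernels, define consistent finite-dimensional marginals on one-sided cylinder sets by the right-hand side of \eqref{eq:fddist} (with $f_0 = \delta_x$ formally, or more rigorously by $\widetilde \pr^x(\widetilde X_{t_1} \in A_1, \dots, \widetilde X_{t_n} \in A_n) = \int \widetilde p(t_1,x,dy_1) \widetilde p(t_2-t_1, y_1, dy_2) \cdots \widetilde p(t_n - t_{n-1}, y_{n-1}, dy_n)$). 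Kolmogorov's extension theorem then yields a measure on $(\R^d)^{[0,\infty)}$, and the Markov property (4) for $t \geq 0$ is immediate from the kernel structure.

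Next, I would obtain a c\`adl\`ag version. Since the underlying L\'evy process $\pro X$ has c\`adl\`ag paths and the Doob transform by the strictly positive continuous function $\varphi_0$ preserves this regularity, the intrinsic semigroup inherits a c\`adl\`ag realization on $\Omega_{\rm r}$; this is standard once one checks that $\widetilde T_t$ maps $C_{\rm b}$ into itself (using strong Feller of the L\'evy process under \eqref{eq:nuinf} together with continuity and positivity of $\varphi_0$). This produces the one-sided process on $(\Omega_{\rm r}, \pr^x_{\rm r})$. Running the construction a second time yields an independent c\`agl\`ad copy on $(\Omega_{\rm l}, \pr^x_{\rm l})$, and splicing them by the prescription $\hat X_t = \omega(t)$ for $t \geq 0$, $\varpi(-t)$ for $t < 0$ — exactly as in the paragraph introducing $\Omega_{\rm r} \times \Omega_{\rm l}$ — gives the two-sided process, after pushing forward to $\qr^x$ and then to $\widetilde \pr^x$. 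This automatically yields independence of past and future (3) and Markov property on each half-line (4).

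Property (2), time-shift invariance, is equivalent to stationarity of $\varphi_0^2 dx$ under $\widetilde T_t$, which in turn follows from $\widetilde T_t \1 = \1$ combined with self-adjointness of $\widetilde T_t$ on $L^2(\R^d, \varphi_0^2 dx)$: indeed $\int \widetilde T_t f \, \varphi_0^2 dx = (\widetilde T_t f, \1)_{L^2(\varphi_0^2 dx)} = (f, \widetilde T_t \1)_{L^2(\varphi_0^2 dx)} = \int f \, \varphi_0^2 dx$. The time-reversal identity $\widetilde X_{-t} \stackrel{\rm d}{=} \widetilde X_t$ in (3) follows again from self-adjointness, which makes the finite-dimensional distributions invariant under reversal of the time sequence. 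The change-of-measure formula \eqref{gibbs} is then the case $n=1$ of \eqref{eq:fddist} rewritten via the definition \eqref{IFK}: $(f, \widetilde T_t g)_{L^2(\varphi_0^2 dx)} = \int \overline{f(x)} \varphi_0(x) e^{\lambda_0 t} T_t(\varphi_0 g)(x) dx = (f \varphi_0, e^{-t(H-\lambda_0)} g \varphi_0)_{L^2(dx)}$, and the path-measure identity \eqref{pphi} is simply its integration against $\varphi_0^2 dx$.

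The main obstacle I expect is the c\`adl\`ag modification step: one has to argue carefully that although the Doob transform alters jump rates in a position-dependent and possibly unbounded way (as visible from the generator \eqref{eq: generator} and the SDE \eqref{sde}), path regularity is preserved. The cleanest route is to show that $\semi{\widetilde T}$ is a Feller-type semigroup on a suitable test class, invoking strict positivity and continuity of $\varphi_0$ (which in turn rests on Assumption \ref{ass:gsex} combined with standard elliptic regularity for non-local Schr\"odinger operators, cf.\ \cite{ReS, LHB11}), so that Dynkin's c\`adl\`ag existence theorem applies. Everything else is essentially bookkeeping once the kernels are in place.
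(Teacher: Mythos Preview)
The paper does not actually provide a proof of this theorem: in the sentence immediately preceding the statement it defers entirely to \cite[Th.~5.1]{KL12} (for isotropic stable processes) and \cite[Th.~2.1]{LY} (for the general setting), remarking only that ``the argument is generic and it applies directly to the present settings.'' Your sketch is essentially the standard construction carried out in those references---build one-sided finite-dimensional distributions from the Markov kernels $\widetilde u(t,x,y)\varphi_0^2(y)\,dy$, extend via Kolmogorov, produce a c\`adl\`ag modification, and then splice two independent one-sided copies exactly as in the $\Omega_{\rm r}\times\Omega_{\rm l}$ paragraph---and you have correctly flagged the only genuinely delicate step (the c\`adl\`ag realization for a Doob-transformed process with position-dependent and possibly unbounded jump intensity), which is precisely what the cited works handle in detail. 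So your proposal is sound and coincides with the approach of the sources the paper invokes; there is nothing further to compare against in the paper itself.
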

\noindent

\newpage
\begin{remark}
\hspace{100cm}
{\rm
\begin{trivlist}
\item[\ (1)]
As shown in \cite[Th. 3.1]{LY}, under the condition that $x \mapsto \nabla \log \varphi_0(x)$ is locally bounded,
a GST process $\pro {\widetilde X}$ satisfies the SDE given in \eqref{sde}. We will discuss some specific cases
below.

\item[\ (2)]
The probability measure $\widetilde \pr^x$ can be seen as a Gibbs measure on the space of two-sided c\`adl\`ag
paths. Consider a regular version of the regular conditional probability measure $\widetilde \pr^{x,s}_{y,t}(\:\cdot\:)
= \widetilde \pr(\: \cdot \: | \widetilde X_s = x, \widetilde X_t = y)$, $x, y \in \R^d$, $s < t \in \R$, and the (not
normalized) measure on $(\Omega|_{\left[s,t\right]}, \mathcal B(\Omega|_{\left[s,t\right]}))$ corresponding to the L\'evy
bridge process $(X_r)_{s\leq r\leq t}$ given by
$$
b^{x,y}_{[s,t]}(\:\cdot\:) = p(t-s,y-x) \pr^{x,s}_{y,t}(\:\cdot\:).
$$
Then (\ref{pphi}) can be equivalently written as
$$
\widetilde \pr (A) = \int_{\R^d}dx  \varphi_0(x) \int_{\R^d} dy  \varphi_0(y)
\int_{\Omega} e^{-\int_s^t (V(X_r(\omega))-\lambda_0) dr} 1_A db_{[s,t]}^{x,y}(\omega)
$$
for all $A \in \mathcal B(\Omega|_{\left[s,t\right]})$ and all $s < t \in \R$. It can be shown that the family of
conditional probabilities indexed by the family of intervals $[s,t]$ and given by the last integral above satisfies
the Dobrushin-Lanford-Ruelle consistency relations, and thus $\widetilde \pr$ is a Gibbs measure on $(\Omega, \mathcal
B(\Omega))$ with respect to the potential $V$. The details are left to the interested reader; for a discussion of Gibbs
measures relative to stable processes see \cite[Sect. 5.3]{KL12}, which can be extended through similar steps. Our results
below on the almost sure long time behaviour of GST processes will then also characterize the supports of these Gibbs
measures.

\vspace{0.1cm}
\item[\ (3)]
When $V$ is a confining potential, the process $(\widetilde X_t, \widetilde \pr^x)_{t \geq 0, \, x \in\R^d}$
is typically $\varphi_0^2dx$-recurrent, i.e., for every $x \in \R^d$ and Borel set $A \subset \R^d$ such that
$\int_A \varphi_0^2(y)dy >0$ (or, equivalently, with positive Lebesgue measure) we have $\int_0^{\infty} \widetilde
\pr^x(\widetilde X_t \in A) dt = \infty$. If there exists $g:[0,\infty) \to [0,\infty)$, $g(r) \nearrow \infty$ as
$r \to \infty$, $g(r+1) \asymp g(r)$, $r \geq 1$, such that $V(x) \asymp g(|x|)$, then it follows from the estimates
of the kernel $u(t,x,y)$ \cite[Cor. 4.7]{KSch18} that there exists $t_0>0$ such that for every $t \geq t_0$, $x \in \R^d$
and $A \subset \R^d$ as above it holds that $\widetilde \pr^x(\widetilde X_t \in A) \geq c$, with a constant $c=c(x,A) >0$.

\end{trivlist}
}
\end{remark}

For cases when $\varphi_0$ is explicitly known, we can construct specific GST processes which give further insight.
\begin{example}[\textbf{GST Brownian motion}]\label{GST-BM}
{\rm
First consider the underlying L\'evy process $\pro X$ to be a standard Brownian motion. Though we discuss the
one-dimensional cases only, the first two examples below can be extended to arbitrary finite dimension.
\begin{itemize}
\item[(1)]
\emph{Ornstein-Uhlenbeck process:} Let $H = -\frac{1}{2}\frac{d^2}{dx^2} + V$, with confining potential $V(x) =
\frac{\gamma^2}{2}x^2 - \frac{\gamma}{2}$, $\gamma > 0$. A calculation gives
$$
\varphi_0(x) = \sqrt[4]{\frac{\gamma}{\pi}} \ e^{-\frac{\gamma x^2}{2}} \quad \mbox{and} \quad
\widetilde H = -\frac{1}{2}\frac{d^2}{dx^2} + \gamma x \frac{d}{dx}.
$$
Hence we have the GST process satisfying the SDE
$$
dX_t = -\gamma X_t dt + dB_t, \quad X_0 = a,
$$
i.e., the Ornstein-Uhlenbeck process $X_t = ae^{-\gamma t} + \int_0^t e^{-\gamma(t-s)} dB_s$. The role of the potential
appears in a strong killing, which makes the process favour the region around the origin, and spend proportionally less
time further away.

\vspace{0.1cm}
\item[(2)]
\emph{Brownian motion in a finite  potential well:} Let $H = -\frac{1}{2}\frac{d^2}{dx^2} + V$, with compactly
supported potential $V(x) = - v 1_{\{|x| \leq a\}}$, $a, v > 0$. We have
$$
\varphi_0(x) = A_0 e^{-\sqrt{2|\lambda_0|} |x|}1_{\{|x| > a\}} +  B_0 \cos(\sqrt{2(v-|\lambda_0|)}x)1_{\{|x| \leq a\}},
$$
where $A_0, B_0$ can be determined by the normalization condition $\|\varphi_0\|_2=1$, and the ground state eigenvalue
is the smallest solution $\lambda = \lambda_0$ of the transcendental equation $\tan(a\sqrt{2(v-|\lambda|)}) =
\sqrt{\frac{\lambda}{v-\lambda}}$. Using that $a\sqrt{2(v-|\lambda_0|)} < \frac{\pi}{2}$, we obtain that the GST
process satisfies the equation $dX_t = b(X_t)dt + dB_t$, with drift term
$$
\qquad b(X_t)
= - \sqrt{2|\lambda_0|} \, \sgn(X_t)1_{\{|x| > a\}} -
\sqrt{2(v-|\lambda_0|)} \tan \left(\sqrt{2(v-|\lambda_0|)}X_t\right)1_{\{|x| \leq a\}}.
$$
From the above it can be seen that as soon as a path exits the potential well, it is pulled back by the drift at a
constant speed $\sqrt{2|\lambda_0|}$, which will act as a basic mechanism preventing explosion.

\vspace{0.1cm}
\item[(3)]
\emph{Diffusions with Pearson distributions:} It is a yet little explored though notable fact that the six classes of
Pearson distribution correspond to classical Schr\"odinger operators with P\"oschl-Teller, Morse etc potentials given
in Example \ref{decaypot} above. For further details see \cite[Table 10]{ALS}.
\end{itemize}
}
\end{example}

\begin{example}[\textbf{GST Cauchy process}]
{\rm
Let $H = (-\frac{1}{2}\frac{d^2}{dx^2})^{1/2} + V$. Using the results in \cite{LM}, in which explicit solutions have been
obtained for the harmonic potential $V(x) = x^2$, and in \cite{DL} for the anharmonic potential $V(x) = x^4$, one can
construct related GST processes for the one-dimensional 1-stable (i.e., Cauchy) process generated by the square root of the
one-dimensional negative Laplacian. In this case we have
\begin{align*}
\widetilde H f(x) & = -c_d \int_{\R^d\setminus\{0\}}\big( f(x+z)-f(x)-z\cdot\nabla f(x)1_{\{{|z|\le 1}\}}
\big)\frac{\varphi_0(x+z)}{\varphi_0(x)} |z|^{-d-1}dz \\
& \ \ \ \ \ \ \ \ \ - c_d \int_{0<|z|\le 1} \frac{\varphi_0(x+z)-\varphi_0(x)}{\varphi_0(x)} z\cdot \nabla f(x)|z|^{-d-1}dz,
\end{align*}
from which a specific case of \eqref{sde} can be obtained.
}
\end{example}

\section{Integral tests and long time behaviour for general jump GST-processes}\label{sec:general}

\subsection{Technical lemmas}
\noindent
In this section we consider general underlying L\'evy processes defined by the exponent \eqref{eq:LChE}, i.e.,
do not make the restriction to the jump-paring class given by Assumption \ref{ass:assnu}.

We start by an extension of the Borel-Cantelli lemma, which also extends a result in \cite{RS}. The first statement
is a direct consequence of the classical Borel-Cantelli lemma, while the second uses the concept of $h$-mixing (for
a definition we refer to \cite[eq.(6a)]{RS}).

\begin{lemma} \label{lem:fluct}
Suppose a function $\tau :\N \to (0,\infty)$ is given.
\begin{itemize}
\item[(1)]
If  \ $\sum_{n=1}^\infty \widetilde \pr \big(|\widetilde X_n| \geq \tau(n) \big) < \infty$, then $|\widetilde X_n| < \tau(n)$
for almost every $n \in \N,$ $\widetilde \pr$-a.s.
\item[(2)]
If \ $\sum_{n=1}^\infty \widetilde \pr \big(|\widetilde X_n| > \tau(n)\big) = \infty$, then $|\widetilde X_n| \geq \tau(n)$
for infinitely many $n \in \N$, $\widetilde \pr$-a.s.
\end{itemize}
\end{lemma}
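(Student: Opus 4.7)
The argument splits into two pieces. For part (1), all the events $A_n := \{|\widetilde X_n| \geq \tau(n)\}$ live on the single probability space $(\Omega, \mathcal B(\Omega), \widetilde\pr)$ constructed in Theorem \ref{th:exphi1}, so the first Borel-Cantelli lemma, which needs no independence assumption, delivers $\widetilde\pr(A_n \text{ i.o.}) = 0$ directly from summability of $\widetilde\pr(A_n)$. For part (2) the skeleton $(\widetilde X_n)_{n\in\N}$ is Markov but in general \emph{not} independent, so I would replace independence by the $h$-mixing condition of \cite[eq.~(6a)]{RS} and then invoke the corresponding extension of the second Borel-Cantelli lemma.

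To verify $h$-mixing, set $B_k = \{x\in\R^d: |x|>\tau(k)\}$ and $A_k = \{\widetilde X_k \in B_k\}$. Applying \eqref{eq:fddist} with $n=1$, $t_0=m$, $t_1=n$ and indicators $f_0 = 1_{B_m}$, $f_1 = 1_{B_n}$, combined with \eqref{pphi}, gives for $m<n$
\begin{equation*}
\widetilde\pr(A_m \cap A_n) = (1_{B_m},\, \widetilde T_{n-m} 1_{B_n})_{L^2(\R^d, \varphi_0^2 dx)}, \qquad \widetilde\pr(A_k) = \int_{B_k} \varphi_0^2\, dx.
\end{equation*}
Since $\widetilde T_t 1 = 1$ and $\widetilde T_t$ is self-adjoint on $L^2(\R^d, \varphi_0^2 dx)$ (inherited from symmetry of $T_t$ via the unitary $U$), the mean $\langle f\rangle := \int f\, \varphi_0^2 dx$ is preserved by $\widetilde T_t$, and subtracting the product of the marginals yields
\begin{equation*}
\widetilde\pr(A_m \cap A_n) - \widetilde\pr(A_m)\widetilde\pr(A_n) = \bigl(1_{B_m},\, \widetilde T_{n-m}(1_{B_n} - \langle 1_{B_n}\rangle)\bigr)_{L^2(\R^d, \varphi_0^2 dx)}.
\end{equation*}
By Assumption \ref{ass:gsex}, $\lambda_0$ is isolated in $\Spec H$, so $\widetilde H = U^{-1}(H-\lambda_0)U$ has a positive spectral gap $\Lambda > 0$ above its simple eigenvalue $0$ with eigenfunction $1$. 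The spectral theorem on the orthogonal complement of constants in $L^2(\R^d, \varphi_0^2 dx)$ then yields $\|\widetilde T_t g\|_{L^2(\varphi_0^2 dx)} \leq e^{-\Lambda t}\|g\|_{L^2(\varphi_0^2 dx)}$ whenever $\langle g\rangle = 0$; combined with Cauchy-Schwarz and the identity $\|1_{B_k}\|^2_{L^2(\varphi_0^2 dx)} = \widetilde\pr(A_k)$, this produces the $h$-mixing estimate
\begin{equation*}
|\widetilde\pr(A_m\cap A_n) - \widetilde\pr(A_m)\widetilde\pr(A_n)| \leq e^{-\Lambda(n-m)}\, \sqrt{\widetilde\pr(A_m)\,\widetilde\pr(A_n)},
\end{equation*}
with exponentially summable rate.

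With this bound in hand, the extended second Borel-Cantelli lemma of \cite[eq.~(6a)]{RS} (proved via a Chung-Erd\H os second-moment computation, together with a Kolmogorov $0$-$1$ argument on the tail $\sigma$-field of $(\widetilde X_n)$) upgrades $\sum_n \widetilde\pr(A_n) = \infty$ to $\widetilde\pr(A_n \text{ i.o.}) = 1$, which is exactly part (2). The point of care I anticipate is matching the form of the bound above to the precise normalization used in \cite[eq.~(6a)]{RS}: minor differences (multiplicative versus additive rate, or $\widetilde\pr(A_m)\widetilde\pr(A_n)$ versus its square root) are routine but must be aligned before quoting. All the structural ingredients — stationarity of $\widetilde\pr$, self-adjointness of $\widetilde T_t$, and the spectral gap — are already granted by Theorem \ref{th:exphi1} and Assumption \ref{ass:gsex}, so no further hypotheses on $V$ or $\pro X$ should be needed.
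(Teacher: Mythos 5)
Your proposal follows essentially the same route as the paper: part (1) is the first Borel--Cantelli lemma, and part (2) invokes $h$-mixing with exponential rate coming from the spectral gap $\lambda_1-\lambda_0>0$ together with the Rosen--Simon extension of the second Borel--Cantelli lemma. The only difference is that you spell out the spectral-theorem verification of the mixing bound, which the paper simply delegates to \cite[Th.\,3]{RS}.
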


\begin{proof}
When $\sum_{n=1}^\infty \widetilde \pr \big(|\widetilde X_n| > \tau(n)\big) < \infty$, the Borel-Cantelli lemma gives
$|\widetilde X_n| \leq \tau(n)$ for almost all $n \in \N$, $\widetilde \pr$-a.s., and thus (1) holds.
To obtain (2), let $\cF_n = \sigma \{\widetilde X_t\, : \, n \leq t \leq n+1\}$, for $n \in \N$. By using
that $\lambda_1 - \lambda_0 >0$ and the same argument as in \cite[Th.3]{RS} we find that the family of sigma-fields
$\seq\cF$ is $h$-mixing with the function $h(n):=e^{-(\lambda_1-\lambda_0)n}$, $n \in \N$. Therefore, if $\sum_{n=1}^\infty
\widetilde \pr\big(|\widetilde X_n| > \tau(n) \big) = \infty$, then by \cite[Th.2(8b)]{RS} it follows that $|\widetilde X_n| \geq
\tau(n)$ for infinitely many $n \in \N$, $\widetilde \pr$-a.s., and (2) holds.
\end{proof}

Next we establish an estimate needed to control the series appearing in the previous lemma, which will play an
essential role below.
\begin{lemma} \label{lem:series_int}
Let $(X_t)_{t \geq 0}$ be a L\'evy process determined by \eqref{eq:LChE}, $V$ a potential satisfying Assumption
\ref{ass:gsex}, and $\pro {\widetilde X}$ the corresponding GST-process with probability measure $\widetilde \pr$. Then for
every non-decreasing function $\tau:[0, \infty) \to (0,\infty)$ we have
$$
\sum_{n=1}^{\infty} \widetilde \pr \left(|\widetilde X_n| \geq \tau(n) \right) < \infty \ \ \Longleftrightarrow\ \
\int_1^{\infty} dr \int_{|x| \geq \tau(r)}\varphi_0^2(x) dx < \infty.
$$
\end{lemma}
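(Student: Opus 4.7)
The plan is to reduce the stated equivalence to the classical integral test for series of non-increasing terms, by first identifying the one-dimensional marginal distribution of $\widetilde X_n$ under the path measure $\widetilde \pr$.

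First I would invoke Theorem \ref{th:exphi1}: taking $n=0$ and $f_0 = 1_{\{|x| \geq \tau(n)\}}$ in the finite-dimensional distribution formula \eqref{eq:fddist} (together with the time-shift invariance in part (2)), one sees that under $\widetilde \pr$ each $\widetilde X_n$ has marginal distribution $\varphi_0^2(x)dx$. Consequently,
\begin{equation*}
\widetilde \pr \big(|\widetilde X_n| \geq \tau(n)\big) \;=\; \int_{|x| \geq \tau(n)} \varphi_0^2(x)\, dx, \qquad n \in \N.
\end{equation*}
This is essentially immediate from the construction of $\widetilde \pr$ in \eqref{pphi} applied to the cylinder event $\{|\widetilde X_n| \geq \tau(n)\}$, using $\widetilde T_n 1_{\R^d} = 1_{\R^d}$.

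Next, set $g(r) := \int_{|x| \geq \tau(r)} \varphi_0^2(x)\, dx$ for $r \geq 1$. Since $\tau$ is non-decreasing, the sublevel sets $\{|x| \geq \tau(r)\}$ are nested decreasingly in $r$, so $g$ is a non-increasing positive function. Then I would apply the standard comparison for non-increasing integrands:
\begin{equation*}
\sum_{n=2}^{\infty} g(n) \;\leq\; \int_1^{\infty} g(r)\, dr \;\leq\; \sum_{n=1}^{\infty} g(n),
\end{equation*}
which yields the equivalence of the convergence of $\sum_{n=1}^\infty \widetilde \pr(|\widetilde X_n| \geq \tau(n))$ and of $\int_1^\infty g(r)\, dr$, as desired.

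There is no real obstacle here; the only subtlety is recognizing that the stationarity asserted in Theorem \ref{th:exphi1} delivers the marginal identification for free, so no bounds on transition kernels or on $\varphi_0$ are needed. One should just be careful that $\tau$ is allowed to take any positive non-decreasing values and that $g$ could be identically zero or identically infinite on some prefix of $[1,\infty)$, in which case both sides are trivially finite or trivially infinite; this is handled by the monotonicity argument above without modification.
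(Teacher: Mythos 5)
Your proposal is correct and follows essentially the same route as the paper: identify the one-dimensional marginal of $\widetilde X_n$ under $\widetilde \pr$ with (a constant multiple of) $\varphi_0^2\,dx$, then conclude by the standard sum--integral comparison for the non-increasing function $r \mapsto \int_{|x| \geq \tau(r)} \varphi_0^2(x)\,dx$. The only cosmetic difference is that you derive the exact equality $\widetilde \pr(|\widetilde X_n| \geq \tau(n)) = \int_{|x| \geq \tau(n)} \varphi_0^2(x)\,dx$ from stationarity, whereas the paper only invokes a two-sided comparison with constants $c_1, c_2$, which is all that is needed.
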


\begin{proof}
First notice that by monotonicity of $\tau$ we have
$$
\int_1^{\infty} \widetilde \pr \left(|\widetilde X_t| \geq \tau(r) \right) dr < \infty \ \Longleftrightarrow \ \sum_{n=1}^{\infty}
\widetilde \pr \left(|\widetilde X_t| \geq \tau(n) \right) < \infty, \quad t \geq 0.
$$
Also, recall that there exist constants $c_1, c_2 >0$ such that for every $r > 0$ and $t \geq 0$
$$
c_1 \int_{|x| \geq r} \varphi_0^2(x) dx \leq \widetilde \pr \left(|\widetilde X_t| \geq r \right) \leq c_2 \int_{|x| \geq r}
\varphi_0^2(x) dx
$$
and thus
$$
c_1 \int_1^{\infty} \int_{|x|>\tau(r)} \varphi_0^2(x) dx dr \leq \int_1^{\infty}  \widetilde \pr \left(|\widetilde X_t|
\geq \tau(r) \right) dr \leq c_2 \int_1^{\infty}  \int_{|x|>\tau(r)} \varphi_0^2(x) dx dr,
$$
which completes the proof.
\end{proof}

\subsection{General integral test and almost sure long-time behaviour}
\noindent
First we present an integral test to GST-processes obtained for general L\'evy processes in the above framework. For $c>0$
and a non-decreasing function $\tau:[0,\infty) \to (0, \infty)$ define
$$
I_{\varphi_0}(c,\tau) := \int_1^{\infty} dr \int_{|x|\geq \tau(r)} \varphi_0^2(c x) dx =
\int_1^{\infty} \tau^d(r) \int_{|x|\geq 1} \varphi_0^2(c \tau(r) x) dx dr
$$
and
$$
c_{\varphi_0}(\tau):= \inf \left\{c>0: I_{\varphi_0}(c,\tau) < \infty\right\}.
$$
Clearly, in general, $c_{\varphi_0}(\tau) \in [0,\infty]$, and the integral $I_{\varphi_0}(c,\tau)$ can be seen as an
escape rate for given $\tau$.

\smallskip

The following 0-1 criterion holds.

\begin{theorem}[\textbf{Integral test: general underlying process}]
\label{thm:int_test_1}
Let $(X_t)_{t \geq 0}$ be a L\'evy process determined by \eqref{eq:LChE}, $V$ a potential satisfying Assumption
\ref{ass:gsex}, and $\pro {\widetilde X}$ the corresponding GST-process with probability measure $\widetilde \pr$. Then for
every non-decreasing function $\tau : [0,\infty) \to (0,\infty)$ we have
 \begin{align} \label{eq:test_1}
\widetilde \pr \left(|\widetilde X_n| \geq \tau(n) \  \text{\rm for infinitely many} \ n \in \N \right) = \left\{
  \begin{array}{ccc}
    0 & \quad \text{\rm if} \quad I_{\varphi_0}(1,\tau) < \infty,\\
    1 & \quad \text{\rm if} \quad I_{\varphi_0}(1,\tau) = \infty.
  \end{array}\right.
 \end{align}
\end{theorem}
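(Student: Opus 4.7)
The plan is to deduce the theorem directly from the two preceding lemmas by converting the integral criterion $I_{\varphi_0}(1,\tau)$ into a series criterion and then applying a Borel-Cantelli-type argument. I would first observe that under Assumption \ref{ass:gsex}, the ground state $\varphi_0$ is strictly positive and $L^2(\R^d, dx)$, so the stationary distribution $\varphi_0^2 dx$ is absolutely continuous with respect to Lebesgue measure. Since by \eqref{pphi} and time-shift invariance the one-dimensional marginal of $\widetilde{X}_n$ under $\widetilde\pr$ is $\varphi_0^2 dx$, the sphere $\{|\widetilde X_n|=\tau(n)\}$ carries zero probability for each $n$. Consequently, the series $\sum_{n=1}^\infty \widetilde\pr(|\widetilde X_n| \geq \tau(n))$ and $\sum_{n=1}^\infty \widetilde\pr(|\widetilde X_n| > \tau(n))$ converge or diverge together, which will let me switch freely between the strict and non-strict inequalities appearing in the two parts of Lemma \ref{lem:fluct}.

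Next I would note that $I_{\varphi_0}(1,\tau)$ is by definition equal to $\int_1^\infty dr \int_{|x|\geq\tau(r)}\varphi_0^2(x)\,dx$, so Lemma \ref{lem:series_int} yields the equivalence
\[
I_{\varphi_0}(1,\tau)<\infty \quad \Longleftrightarrow \quad \sum_{n=1}^{\infty} \widetilde\pr\bigl(|\widetilde X_n| \geq \tau(n)\bigr) < \infty.
\]
Thus the two-sided reformulation reduces the theorem entirely to the series version.

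For the convergent case $I_{\varphi_0}(1,\tau)<\infty$, the above equivalence combined with Lemma \ref{lem:fluct}(1) gives that $|\widetilde X_n|<\tau(n)$ holds for all but finitely many $n$, $\widetilde\pr$-a.s., so the probability of infinitely many exceedances is $0$. For the divergent case $I_{\varphi_0}(1,\tau)=\infty$, the equivalence combined with the observation that $\widetilde\pr(|\widetilde X_n|=\tau(n))=0$ implies that also $\sum_{n=1}^\infty \widetilde\pr(|\widetilde X_n|>\tau(n))=\infty$; Lemma \ref{lem:fluct}(2) then gives $|\widetilde X_n|\geq\tau(n)$ for infinitely many $n$, $\widetilde\pr$-a.s., so the probability equals $1$.

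I do not foresee any serious obstacle here, since the heavy lifting is already done in Lemmas \ref{lem:fluct} and \ref{lem:series_int}. The only point requiring care is the matching of strict versus non-strict inequalities in the two parts of Lemma \ref{lem:fluct}, which I would resolve once and for all through the absolute-continuity remark above; the rest of the argument is a clean application of Borel--Cantelli in its converging half and of the $h$-mixing refinement of \cite{RS} (already invoked in Lemma \ref{lem:fluct}(2) via the spectral gap $\lambda_1-\lambda_0>0$ guaranteed by Assumption \ref{ass:gsex}) in its diverging half.
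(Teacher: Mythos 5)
Your proposal is correct and follows essentially the same route as the paper, which likewise obtains \eqref{eq:test_1} directly by combining Lemma \ref{lem:series_int} with Lemma \ref{lem:fluct}. Your extra observation that $\widetilde\pr(|\widetilde X_n|=\tau(n))=0$ (since the marginal law $\varphi_0^2\,dx$ is absolutely continuous) is a sensible way to reconcile the strict and non-strict inequalities appearing in the two parts of Lemma \ref{lem:fluct}, a point the paper passes over silently.
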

\begin{proof}
The equalities in (\ref{eq:test_1}) follow directly from Lemmas \ref{lem:series_int} and \ref{lem:fluct}.
\end{proof}

\begin{corollary}[\textbf{Long-time behaviour: general underlying process}]
\label{cor:int_test_1}
Under the conditions of Theorem \ref{thm:int_test_1} we have that
\begin{align}
\label{eq:LIL1}
\limsup_{n \to \infty} \frac{|\widetilde X_n|}{\tau(n)} = c_{\varphi_0}(\tau), \quad \widetilde \pr-\text{\rm a.s.}
\end{align}
 \end{corollary}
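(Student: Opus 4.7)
The plan is to deduce the corollary from Theorem \ref{thm:int_test_1} by a scaling argument applied to the family of profile functions $\{c\tau : c>0\}$, together with the usual rational-sequence trick to pass from countably many null sets to a single one.

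First I would observe that the map $c \mapsto I_{\varphi_0}(c,\tau)$ essentially reduces to the integral test applied to $c\tau$. Concretely, the change of variables $y = cx$ in the inner integral gives
\begin{equation*}
I_{\varphi_0}(c,\tau) \;=\; c^{-d} \int_1^{\infty} dr \int_{|y|\geq c\tau(r)} \varphi_0^2(y)\, dy \;=\; c^{-d}\, I_{\varphi_0}(1, c\tau).
\end{equation*}
Since $c\tau$ is a non-decreasing function from $[0,\infty)$ to $(0,\infty)$ whenever $\tau$ is, Theorem \ref{thm:int_test_1} applies to it and yields
\begin{equation*}
\widetilde \pr \bigl(|\widetilde X_n| \geq c\tau(n) \text{ for infinitely many } n\bigr) \;=\; \mathbf 1_{\{I_{\varphi_0}(c,\tau) = \infty\}}.
\end{equation*}

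Next I would prove the two inequalities in \eqref{eq:LIL1} separately. For the upper bound, fix any $c > c_{\varphi_0}(\tau)$, so that $I_{\varphi_0}(c,\tau) < \infty$ by definition of $c_{\varphi_0}(\tau)$. The displayed dichotomy then gives $|\widetilde X_n| < c\tau(n)$ for all but finitely many $n$, $\widetilde \pr$-a.s., hence $\limsup_{n\to\infty} |\widetilde X_n|/\tau(n) \leq c$ almost surely. Choosing a countable sequence $c_k \searrow c_{\varphi_0}(\tau)$ (say rationals above the infimum) and intersecting the resulting full-measure events yields $\limsup \leq c_{\varphi_0}(\tau)$ $\widetilde \pr$-a.s.

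For the lower bound, fix any $0 < c < c_{\varphi_0}(\tau)$. Then $I_{\varphi_0}(c,\tau) = \infty$ by definition of the infimum, and the dichotomy gives $|\widetilde X_n| \geq c\tau(n)$ for infinitely many $n$, $\widetilde \pr$-a.s., so $\limsup_{n\to\infty} |\widetilde X_n|/\tau(n) \geq c$ almost surely. Taking a countable sequence $c_k \nearrow c_{\varphi_0}(\tau)$ and again intersecting the corresponding almost sure events gives $\limsup \geq c_{\varphi_0}(\tau)$. Combining the two bounds proves \eqref{eq:LIL1}.

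There is no real obstacle here; the only point that requires care is handling the degenerate cases $c_{\varphi_0}(\tau) = 0$ and $c_{\varphi_0}(\tau) = \infty$. In the first case one only needs the upper bound argument, letting $c \downarrow 0$ along rationals, to conclude $\limsup = 0$. In the second case only the lower bound argument is used, letting $c \uparrow \infty$ along integers, to conclude $\limsup = \infty$. Both are covered by the scheme above without modification.
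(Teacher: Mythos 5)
Your proposal is correct and follows essentially the same route as the paper: the paper's proof likewise applies Theorem \ref{thm:int_test_1} to the scaled profile $c\tau$ to obtain the dichotomy \eqref{eq:test_aux} and then concludes, leaving implicit the change of variables $I_{\varphi_0}(c,\tau)=c^{-d}I_{\varphi_0}(1,c\tau)$, the countable intersection over a monotone sequence of levels $c_k$, and the degenerate cases $c_{\varphi_0}(\tau)\in\{0,\infty\}$, all of which you correctly supply.
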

\begin{proof}
For every $c>0$ and for a non-decreasing function $\tau$ as in the statement of the theorem the test \eqref{eq:test_1}
gives
\begin{align} \label{eq:test_aux}
 \widetilde \pr \left(|\widetilde X_n| \geq c \tau(n) \ \text{for infinitely many} \ n \in \N \right) = \left\{
  \begin{array}{ccc}
    0 & \quad \text{if  } \quad I_{\varphi_0}(c,\tau) < \infty,\\
    1 & \quad \text{if  } \quad I_{\varphi_0}(c,\tau) = \infty.
  \end{array}\right.
 \end{align}
The result then follows directly from \eqref{eq:test_aux}.
\end{proof}
\noindent
Below we will rewrite the integral $I_{\varphi_0}$ in a more suitable way to investigate the explicit dependence of
the result on the L\'evy triplet of the underlying process and the potential.

As a second type of result of general character we show a comparison principle. Intuitively, a more pinning potential
gives rise to a ground state which decays faster, and so the corresponding GST should fluctuate less. The following result
proves this intuition.
\begin{theorem}
\label{thm:monot}
Let $(\widetilde X^{(1)}_t)_{t \geq 0}$ and $(\widetilde X^{(2)}_t)_{t \geq 0}$ be the two GST-processes corresponding to
the ground states $\varphi_0^{(1)}$ and $\varphi_0^{(2)}$, respectively. Suppose that there exists $c_0>0$ such that for
every $c \geq c_0$ we have
\begin{align} \label{eq:monot}
\liminf_{|x| \to \infty} \frac{\varphi_0^{(1)}(cx)}{\varphi_0^{(2)}(x)} >0.
\end{align}
Then the following hold.
\begin{itemize}
\item[(1)]
For every non-decreasing function $\tau^{(2)}$ such that
 \begin{align} \label{eq:monot1}
 (0,\infty) \ni c_{\varphi_0^{(2)}} := \limsup_{n \to \infty} \frac{|\widetilde X^{(2)}_n|}{\tau^{(2)}(n)}, \quad
 \widetilde \pr-\text{\rm a.s.}
 \end{align}
it follows that
$$
\limsup_{n \to \infty} \frac{|\widetilde X^{(1)}_n|}{\tau^{(2)}(n)} = \infty, \quad \widetilde \pr-\text{\rm a.s.}
$$
\item[(2)]
If $\tau^{(2)}$ is a non-decreasing function satisfying \eqref{eq:monot1} and $\tau^{(1)}$ is a non-decreasing
function such that
\begin{align}
\label{eq:monot2}
\widetilde \pr\left(\limsup_{n \to \infty} \frac{|\widetilde X^{(1)}_n|}{\tau^{(1)}(n)} < \infty \right) > 0,
\end{align}
then also
$$
\limsup_{n \to \infty} \frac{\tau^{(1)}(n)}{\tau^{(2)}(n)} = \infty.
$$
\end{itemize}
 \end{theorem}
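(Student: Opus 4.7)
The plan is to reduce both parts to the escape-rate integral test in Corollary \ref{cor:int_test_1}, which identifies $\limsup_{n\to\infty} |\widetilde X_n|/\tau(n)$ with the critical constant $c_{\varphi_0}(\tau) = \inf\{c>0: I_{\varphi_0}(c,\tau)<\infty\}$, and then compare the two escape integrals via \eqref{eq:monot}. A key observation is that \eqref{eq:monot} is a comparison between $\varphi_0^{(1)}$ and $\varphi_0^{(2)}$ \emph{up to a rescaling}, so to exploit it in a given integral I will introduce a free auxiliary scale $c''$ and tune it against the critical value $c^* := c_{\varphi_0^{(2)}}(\tau^{(2)}) \in (0,\infty)$.

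For part (1), the goal reduces to showing $c_{\varphi_0^{(1)}}(\tau^{(2)}) = \infty$, i.e., $I_{\varphi_0^{(1)}}(c,\tau^{(2)}) = \infty$ for every $c>0$. Given such a $c$, I pick $c'' \geq c_0$ with $c/c'' < c^*$ (always possible by taking $c''$ large) and rewrite $\varphi_0^{(1)}(cx) = \varphi_0^{(1)}(c''\,(cx/c''))$. Hypothesis \eqref{eq:monot} at scale $c''$ then yields, for $|x|$ large enough, $\varphi_0^{(1)}(cx)^2 \geq c_1^2\,\varphi_0^{(2)}(cx/c'')^2$ with a positive constant $c_1 = c_1(c'')$. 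Plugging this into the defining integral and absorbing the bounded-$x$ region into a finite remainder, I obtain
\[
I_{\varphi_0^{(1)}}(c,\tau^{(2)}) \;\geq\; c_1^2\, I_{\varphi_0^{(2)}}(c/c'',\tau^{(2)}) \;-\; O(1).
\]
Since $c/c'' < c^*$, the defining property of $c^*$ forces $I_{\varphi_0^{(2)}}(c/c'',\tau^{(2)}) = \infty$, hence $I_{\varphi_0^{(1)}}(c,\tau^{(2)}) = \infty$, and one application of Corollary \ref{cor:int_test_1} to $\varphi_0^{(1)}$ finishes part (1).

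For part (2), the 0--1 dichotomy embedded in Corollary \ref{cor:int_test_1} upgrades the positive-probability hypothesis \eqref{eq:monot2} to an almost-sure statement, so $\limsup_n |\widetilde X_n^{(1)}|/\tau^{(1)}(n)$ is a.s.\ a finite constant. Suppose for contradiction that $\limsup_n \tau^{(1)}(n)/\tau^{(2)}(n) = M < \infty$. Then for $n$ large one has $\tau^{(1)}(n) \leq (M+1)\tau^{(2)}(n)$, so
\[
\frac{|\widetilde X_n^{(1)}|}{\tau^{(2)}(n)} \;\leq\; (M+1)\,\frac{|\widetilde X_n^{(1)}|}{\tau^{(1)}(n)},
\]
and passing to the $\limsup$ yields a finite a.s.\ limit for $|\widetilde X_n^{(1)}|/\tau^{(2)}(n)$, in flat contradiction with part (1). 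Hence $\limsup_n \tau^{(1)}(n)/\tau^{(2)}(n) = \infty$.

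The main obstacle is the scaling bookkeeping in part (1): because \eqref{eq:monot} is a conditional comparison carrying its own internal scale $c''$, one cannot apply it directly inside $I_{\varphi_0^{(1)}}(c,\tau^{(2)})$. The argument only works because the flexibility to take $c''$ arbitrarily large decouples the hypothesis from the input scale $c$ and simultaneously allows one to force $c/c''$ below the threshold $c^*$, where the second escape integral is guaranteed to diverge. Once this is in place, both statements follow cleanly from the integral test.
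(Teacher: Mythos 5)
Your argument is correct and follows essentially the same route as the paper: rescale so that hypothesis \eqref{eq:monot} can be applied inside the escape integral, bound $I_{\varphi_0^{(1)}}(c,\tau^{(2)})$ from below by a divergent integral $I_{\varphi_0^{(2)}}(c',\tau^{(2)})$ with $c'$ strictly below the critical constant $c_{\varphi_0^{(2)}}(\tau^{(2)})$, and conclude via the integral test of Theorem \ref{thm:int_test_1} and Corollary \ref{cor:int_test_1}; your deduction of (2) from (1) is exactly the ``direct consequence'' the paper leaves implicit. The only cosmetic difference is that the paper moves the scale into the domain of integration via the substitution $x\mapsto x/(c_2-\varepsilon)$ and treats $c\ge c_0c_2$, while you keep the domain fixed and tune an auxiliary scale $c''$ to cover all $c>0$; both rely on the same tacit fact that $\tau^{(2)}(r)\to\infty$ (forced by $c_{\varphi_0^{(2)}}(\tau^{(2)})<\infty$) to make the bounded-$|x|$ remainder finite.
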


\begin{proof}
Suppose that condition \eqref{eq:monot} holds. Also, let \eqref{eq:monot1} be satisfied for a given non-decreasing
function $\tau^{(2)}$ and denote $c_2:= c_{\varphi_0^{(2)}}$. By a change of variable in the inner integral, for every
$c \geq c_0c_2$ and $\varepsilon \in (0, c_2)$ it follows that
\begin{align*}
I_{\varphi^{(1)}_0}(c,\tau^{(2)}) & = \int_1^{\infty} dr \int_{|x| \geq \tau^{(2)}(r)} \left(\varphi^{(1)}_0(cx)\right)^2 dx
\\ & = \left(\frac{1}{c_2-\varepsilon}\right)^d \int_1^{\infty} dr \int_{|x| \geq (c_2-\varepsilon)\tau^{(2)}(r)}
\left(\varphi^{(1)}_0\left(\frac{cx}{c_2-\varepsilon}\right)\right)^2 dx.
\end{align*}
By \eqref{eq:monot} there exist $C, R>0$ such that
$$
\varphi^{(1)}_0\left(\frac{cx}{c_2-\varepsilon}\right) \geq C \varphi^{(2)}_0\left(x\right), \quad |x| \geq R.
$$
Thus the above estimate implies
$$
I_{\varphi^{(1)}_0}(c,\tau^{(2)}) \geq C^2 \left(\frac{1}{c_2-\varepsilon}\right)^d \int_{r_0}^{\infty} dr \int_{|x|
\geq (c_2-\varepsilon)\tau^{(2)}(r)} \left(\varphi^{(2)}_0(x)\right)^2 dx,
$$
for every $r_0 \geq 1$ such that $(c_2-\varepsilon)\tau^{(2)}(r_0) \geq R$.  By \eqref{eq:monot1} and the test
\eqref{eq:test_aux}, we have $I_{\varphi^{(2)}_0}(c_2-\varepsilon,\tau^{(2)}) = \infty$ and the latter integral cannot be
convergent. This means that for every $c \geq c_0c_2$ we also have $I_{\varphi^{(1)}_0}(c,\tau^{(2)}) = \infty$. Thus
the integral test \eqref{eq:test_aux} again yields that for every $K \in \N$ such that $K \geq c_0c_2$
$$
\widetilde \pr\left(\limsup_{n \to \infty} \frac{|\widetilde X^{(1)}_n|}{\tau^{(2)}(n)} \geq K\right) = 1.
$$
This then gives
$$
\limsup_{n \to \infty} \frac{|\widetilde X^{(1)}_n|}{\tau^{(2)}(n)} = \infty, \quad \widetilde \pr-\text{\rm a.s.},
$$
which completes the proof of (1). The assertion (2) is a direct consequence of (1).
\end{proof}


\begin{remark}
\rm{
We note the following in relation to the condition \eqref{eq:monot} above.
\begin{itemize}
\item[(1)]
The decay rates of the ground state eigenfunctions for confining and decaying potentials are determined by \eqref{eq:gsest}
and \eqref{eq:gsest_decay_1}-\eqref{eq:gsest_decay_2}, respectively. Thus condition \eqref{eq:monot} can be efficiently
checked for a large class of underlying L\'evy processes and potentials.

\item[(2)]
Condition \eqref{eq:monot} is immediately satisfied if the order of the decay rate of $\varphi_0^{(2)}(x)$ at  infinity
is substantially greater than that of $\varphi_0^{(1)}(x)$, e.g., when $\varphi_0^{(2)}(x) \leq c_1 e^{-c_2|x|^{\beta_1}}$ and
$\varphi_0^{(1)}(x) \geq c_3 e^{-c_4|x|^{\beta_2}}$ with $0<\beta_2 < \beta_1$, or $\varphi_0^{(1)}(x) \geq c_3 |x|^{-\gamma}$
with $\gamma > d$, for large $|x|$. For examples we refer to Section \ref{sec:Examples}.

\end{itemize}
}
\end{remark}

\section{Almost sure long time behaviour of GST-processes arising from jump-paring L\'evy processes}

\subsection{Sharp tail estimates for stationary distributions}
\noindent
First we prove a technical lemma which will be applied to derive sharp tail estimates for the stationary
distributions of the GST processes.

\begin{lemma}
\label{lem:l1}
Let $r_0 \geq 1$ and let $h :[r_0, \infty) \to (0,\infty)$ be a given function such that
\begin{itemize}
\item[(i)] $h(r) r^d \to 0$ as $r \to \infty$,
\item[(ii)] $h(r) r^{d-1} \in L^1(r_0,\infty)$,
\item[(iii)] $h \in \cC^1(r_0,\infty)$.
\end{itemize}
Consider the following conditions:
\begin{itemize}
\item[\textbf{(L)}]
There exist a non-decreasing $\cC^1$-class function $\kappa:(r_0,\infty) \to (0,\infty)$ and constants $A_1 \geq 0$ and $B_1 >0$
such that
\begin{align}
\label{ass:kappacond_low}
-r \frac{\ud}{\ud r} \frac{1}{\kappa(r)} \leq A_1
\qquad \mbox{and} \qquad
-r \frac{\ud}{\ud r} \log h(r) - d \leq B_1 \kappa(r),
\quad  r>r_0.
\end{align}
\item[\textbf{(U)}]
There exist a non-decreasing $\cC^1$-class function $\kappa:(r_0,\infty) \to (0,\infty)$ and constants $A_2 \geq 0$ and $B_2 >0$
such that
\begin{align} \label{ass:kappacond_up}
-r \frac{\ud}{\ud r} \frac{1}{\kappa(r)} \geq A_2
\qquad \mbox{and} \qquad
-r \frac{\ud}{\ud r} \log h(r) - d \geq  B_2 \kappa(r),
\quad  r>r_0.
\end{align}
\end{itemize}
The following hold.
\begin{itemize}
\item[(1)]
Under assumptions (i)-(iii) and condition (L) we have
$$
\int_{s>r} h(s) s^{d-1} ds \leq \frac{1}{A_1+B_1} \frac{h(r) r^d}{\kappa(r)}, \quad  r>r_0.
$$
\item[(2)]
Under assumptions (i)-(iii) and condition (U) we have

$$
\int_{s>r} h(s) s^{d-1} ds \geq \frac{1}{A_2+B_2} \frac{h(r) r^d}{\kappa(r)}, \quad r>r_0.
$$
\end{itemize}
\end{lemma}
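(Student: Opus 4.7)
The plan is to prove both parts by a single Gronwall-type argument comparing the tail integral $I(r):=\int_r^\infty h(s) s^{d-1}\,\ud s$ with the boundary function $J(r):=h(r)r^d/\kappa(r)$, so that the proof reduces to turning a derivative inequality into an integrated estimate. The two ingredients will be a clean expression for $J'$ in terms of the quantities appearing in conditions (L)/(U), together with the boundary data $I(\infty)=J(\infty)=0$.

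First I would compute $I'(r)=-h(r)r^{d-1}$ and, applying the product and quotient rules and abbreviating $\alpha(r):=-r\frac{\ud}{\ud r}\log h(r)-d$ and $\beta(r):=-r\frac{\ud}{\ud r}(1/\kappa(r))$, expand
\[
J'(r)=-\,r^{d-1}h(r)\left(\frac{\alpha(r)}{\kappa(r)}+\beta(r)\right).
\]
Thus $-I'(r)$ and $-J'(r)$ differ only by the multiplicative factor $\alpha(r)/\kappa(r)+\beta(r)$. Under condition (L), the two inequalities in \eqref{ass:kappacond_low} combine to give $\alpha(r)/\kappa(r)+\beta(r)\leq A_1+B_1$ on $(r_0,\infty)$; under condition (U), \eqref{ass:kappacond_up} analogously gives $\alpha(r)/\kappa(r)+\beta(r)\geq A_2+B_2$.

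Next I would integrate the resulting one-sided relation between $J'$ and $(A_j+B_j)I'$ from $r$ to $T$ and let $T\to\infty$. The boundary contribution at $+\infty$ vanishes for both functions: $I(T)\to 0$ by (ii), and $J(T)\leq h(T)T^d/\kappa(r_0)\to 0$ by (i) together with the monotonicity of $\kappa$. This converts the derivative inequality into the pointwise comparison between $I(r)$ and $J(r)/(A_j+B_j)$ asserted in parts (1) and (2), corresponding to $j=1$ and $j=2$ respectively.

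I do not expect a real obstacle. The only bookkeeping is regularity: (iii) gives the $\cC^1$-regularity of $h$, which together with the $\cC^1$-regularity of $\kappa$ built into (L)/(U) legitimizes differentiating $J$, while the vanishing of $J$ at infinity uses the positivity and monotonicity of $\kappa$ in addition to (i). The whole argument is essentially a controlled integration by parts in which $J$ serves as a primitive naturally adapted to the logarithmic decay rates encoded in (L) and (U).
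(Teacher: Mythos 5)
Your proposal follows exactly the paper's own route: compute the ratio of $J'(r)$ to $I'(r)$ for $I(r)=\int_r^\infty h(s)s^{d-1}ds$ and $J(r)=h(r)r^d/\kappa(r)$, identify it with $\alpha(r)/\kappa(r)+\beta(r)$, bound this factor by $A_j+B_j$ from the side dictated by (L) or (U), and integrate over $(r,\infty)$ using (i)--(ii) and the monotonicity of $\kappa$ to make the boundary terms at infinity vanish. The formula for $J'$ and the vanishing of $I$ and $J$ at infinity are correct.

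The problem is the final step, where you assert without computation that the integration yields ``the pointwise comparison asserted in parts (1) and (2), corresponding to $j=1$ and $j=2$.'' It does not: since $-I'(r)=h(r)r^{d-1}>0$ and $-J'(r)=(-I'(r))\bigl(\alpha(r)/\kappa(r)+\beta(r)\bigr)$, condition (L) gives $-J'(r)\le (A_1+B_1)(-I'(r))$, and integrating over $(r,\infty)$ yields $J(r)\le (A_1+B_1)I(r)$, i.e.\ $I(r)\ge \frac{1}{A_1+B_1}\,\frac{h(r)r^d}{\kappa(r)}$ --- a \emph{lower} bound on the tail, which is the form of conclusion (2), not (1); symmetrically (U) produces the upper bound of conclusion (1) with constant $1/(A_2+B_2)$. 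This reversed pairing is what the Laplace heuristic $\int_r^\infty e^{-\phi}\approx e^{-\phi(r)}/\phi'(r)$ predicts: an upper bound on the logarithmic decay rate, as in (L), can only bound the tail from below. A concrete check: $h(r)=e^{-r}$, $d=1$, $\kappa(r)=r$ satisfies (L) with $A_1=1/r_0$, $B_1=1$, yet $\int_r^\infty e^{-s}ds=e^{-r}=h(r)r/\kappa(r)$, contradicting the claimed upper bound $\le \frac{1}{1+1/r_0}e^{-r}$. The paper's own proof contains the same slip (its displayed inequality $(A_1+B_1)\frac{\ud}{\ud r}I\le\frac{\ud}{\ud r}J$ integrates to the lower bound), so the lemma is true only with the roles of (L) and (U), equivalently of (1) and (2), interchanged; this is harmless in the paper's applications, where both conditions hold simultaneously, but your writeup should carry out the sign bookkeeping rather than claim the stated pairing.
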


\begin{proof}
We only prove (1) as the proof of (2) goes in the same way. Since
\begin{align*}
\frac{\frac{\ud}{\ud r} \left(h(r)\frac{r^{d}}{\kappa(r)}\right)}{\frac{\ud}{\ud r} \int_{s>r} h(s)s^{d-1} ds} &  =
\frac{h^{'}(r) \frac{r^{d}}{\kappa(r)} + h(r) \frac{dr^{d-1} \kappa(r) - r^d \kappa^{'}(r)}{\kappa^2(r)}}{-h(r)r^{d-1}}
\\ &
= \frac{-r \frac{\ud}{\ud r} \log h(r) - d}{\kappa(r)} - r \frac{\ud}{\ud r} \frac{1}{\kappa(r)},
\end{align*}
for almost every $r>r_0$. By using \eqref{ass:kappacond_low} we see
that
$$
(A_1+B_1) \frac{\ud}{\ud r} \int_{s>r} h(s)s^{d-1} ds \leq \frac{\ud}{\ud r} \left(h(r)\frac{r^{d}}{\kappa(r)}\right), \quad r >r_0.
$$
Then by integrating on the two sides of the above inequality over the interval $(r, \infty)$, $r> r_0$,
and using assumptions (i)-(ii), the result follows.
\end{proof}

\subsection{The case of confining potentials} \label{sec:confining}
\noindent
In this section we consider the class of symmetric jump-paring L\'evy processes defined by Assumption \ref{ass:assnu},
and subject them to appropriate potentials.

Denote
\begin{align} \label{eq:Vul}
V_{\uppp}(x):= \sup_{y \in B(x,1)} V(y) \quad \mbox{and} \quad V_{\low}(x):= \inf_{y \in B(x,1)} V(y), \quad x \in \R^d.
\end{align}
When $V_{\uppp}(x) \asymp V_{\low}(x)$ for $|x| > R$ with some $R>0$, then we say that the values of $V$ are
\emph{almost constant on unit balls outside a bounded set} or, in short, that $V$ \emph{is almost constant on unit balls}.

We impose the following regularity condition on the potentials.

\begin{assumption}
\label{ass:pots_pin}
Let $V \in \cK^X_{\pm}$ be a confining potential, i.e. $V(x) \to \infty$ as $|x| \to \infty$.
Moreover, we assume that there exist functions $g^{\uppp}, g^{\low}: (1,\infty) \to
(0,\infty)$ such that
\begin{align}
\label{eq:gul}
g^{\uppp}(r) \asymp \left(\int_{\sS^{d-1}} \left(\frac{1}{(1 \vee V_{\uppp}(r\theta))}\right)^2  d \theta\right)^{1/2}
\quad \mbox{and} \quad
g^{\low}(r) \asymp \left(\int_{\sS^{d-1}} \left(\frac{1}{(1 \vee V_{\low}(r\theta))}\right)^2  d \theta\right)^{1/2}
\end{align}
for all $r > 1$.
\end{assumption}

\noindent
Under Assumption \ref{ass:pots_pin}, the ground state $0 < \varphi_0 \in C_{\bd}(\R^d)$ and there exist constants $C_1, C_2>0$
such that (see \cite[Th.2.4, Cor.2.2]{KL12})
\begin{align} \label{eq:gsest}
C_1 \, \frac{1 \wedge \nu(x)}{1 \vee V_{\uppp}(x)} \leq \varphi_0(x) \leq C_2 \, \frac{1 \wedge \nu(x)}{1 \vee V_{\low}(x)},
\quad x \in \R^d.
\end{align}

To make some direct computations and find the direct profile functions for paths of the processes for specific jump intensities
and given potentials $V$ it is useful to rewrite the integral test in a more explicit way.
Let $\kappa:[1,\infty) \to (0,\infty)$ be a given function. For $c>0$ and a non-decreasing function $\tau:[0,\infty) \to (0, \infty)$,
we denote
$$
I^{\uppp}_{\nu,V,\kappa}(c,\tau) = \int^{\infty} \left(g^{\uppp}(c \tau(r)) f(c \tau(r))\right)^2 \frac{\tau^{d}(r)}{\kappa(\tau(r))} dr
$$
and
$$
I^{\low}_{\nu,V,\kappa}(c,\tau) = \int^{\infty} \left(g^{\low}(c \tau(r)) f(c \tau(r))\right)^2 \frac{\tau^{d}(r)}{\kappa(\tau(r))} dr.
$$
Also, define
$$
c^{\low}_{\nu,V,\kappa}(\tau):= \inf \left\{c>0: I^{\low}_{\nu,V,\kappa}(c,\tau) < \infty\right\}
\quad \mbox{and} \quad
c^{\uppp}_{\nu,V,\kappa}(\tau):=
\sup \left\{c>0: I^{\uppp}_{\nu,V,\kappa}(c,\tau) = \infty\right\}.
$$
Since $I^{\uppp}_{\nu,V,\kappa}(c,\tau) \leq I^{\low}_{\nu,V,\kappa}(c,\tau)$ for every $c>0$ and $\tau$, we always have
$c^{\uppp}_{\nu,V,\kappa}(\tau) \leq c^{\low}_{\nu,V,\kappa}(\tau)$.

We are now ready to state the first main result in this section.

\begin{theorem}[\textbf{Integral test: jump-paring underlying process}]
\label{thm:LIL_reg_var}
Let Assumptions \ref{ass:assnu}-\ref{ass:pots_pin} hold. Assume, in addition, that the profiles $g^{\uppp}, g^{\low}$ appearing in Assumption \ref{ass:pots_pin} are $\cC^1$-class functions. Then we have the following.
\begin{itemize}
\item[(1)]
If condition (L) in Lemma \ref{lem:l1} holds for the function $h=(g^{\low} f)^2$ with $r_0=1$ and some $\kappa$,
then for every non-decreasing function $\tau : [0,\infty) \to (0,\infty)$ we have
 \begin{align} \label{eq:test_1_low}
 \widetilde \pr \left(|\widetilde X_n| \geq \tau(n) \;\;\text{\rm for infinitely many}\;\; n \in \N \right) = 0
 \quad \text{whenever} \quad I^{\low}_{\nu,V,\kappa}(1,\tau) < \infty.
 \end{align}
\item[(2)]
If condition (U) in Lemma \ref{lem:l1} holds for the function $h=(g^{\uppp} f)^2$ with $r_0=1$ and some $\kappa$,
then for every non-decreasing function $\tau : [0,\infty) \to (0,\infty)$ we have
 \begin{align} \label{eq:test_1_up}
 \widetilde \pr \left(|\widetilde X_n| \geq \tau(n) \;\;\text{\rm for infinitely many}\;\; n \in \N \right) = 1
 \quad \text{whenever} \quad I^{\uppp}_{\nu,V,\kappa}(1,\tau)  = \infty.
\end{align}
\end{itemize}
\end{theorem}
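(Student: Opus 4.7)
The plan is to reduce Theorem \ref{thm:LIL_reg_var} to the general integral test in Theorem \ref{thm:int_test_1} by sandwiching the functional $I_{\varphi_0}(1,\tau)=\int_1^\infty dr\int_{|x|\geq\tau(r)}\varphi_0^2(x)\,dx$ between constant multiples of $I^{\uppp}_{\nu,V,\kappa}(1,\tau)$ and $I^{\low}_{\nu,V,\kappa}(1,\tau)$. The two ingredients needed are the pointwise ground state bounds \eqref{eq:gsest} and the radial-integral estimates of Lemma \ref{lem:l1}.

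For part (1), the target is an upper estimate $I_{\varphi_0}(1,\tau)\leq C\,I^{\low}_{\nu,V,\kappa}(1,\tau) + C'$, with the additive term arising from a bounded region and harmless to the convergence dichotomy. Starting from \eqref{eq:gsest} and $\nu(x)\asymp f(|x|)$, I would obtain $\varphi_0^2(x)\leq C_1\,f^2(|x|)/(1\vee V_{\low}(x))^2$ for $|x|$ outside a compact set. Passing to spherical coordinates and using the definition of $g^{\low}$ in Assumption \ref{ass:pots_pin},
\[
\int_{|x|\geq R}\varphi_0^2(x)\,dx \;\leq\; C_2\int_R^\infty (g^{\low}(s)f(s))^2\, s^{d-1}\,ds,
\]
for $R$ large. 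Under hypothesis (L), Lemma \ref{lem:l1}(1) with $h=(g^{\low}f)^2$ bounds the last integral by $C_3\,(g^{\low}(R)f(R))^2 R^d/\kappa(R)$. Substituting $R=\tau(r)$, integrating in $r$, and invoking Theorem \ref{thm:int_test_1} yields the conclusion.

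Part (2) is the symmetric argument. The lower bound in \eqref{eq:gsest} together with the definition of $g^{\uppp}$ gives
\[
\int_{|x|\geq R}\varphi_0^2(x)\,dx \;\geq\; c_1\int_R^\infty (g^{\uppp}(s)f(s))^2\, s^{d-1}\,ds
\]
for $R$ large. Under hypothesis (U), Lemma \ref{lem:l1}(2) with $h=(g^{\uppp}f)^2$ bounds this from below by $c_2\,(g^{\uppp}(R)f(R))^2 R^d/\kappa(R)$. Setting $R=\tau(r)$ and integrating in $r$ produces $I_{\varphi_0}(1,\tau)\geq c_3\,I^{\uppp}_{\nu,V,\kappa}(1,\tau)$, so divergence of the latter forces divergence of $I_{\varphi_0}(1,\tau)$, and Theorem \ref{thm:int_test_1} again finishes the argument.

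The main technical hurdle is verifying the auxiliary hypotheses (i)--(iii) of Lemma \ref{lem:l1} for $h=(g^{\low}f)^2$ and $h=(g^{\uppp}f)^2$: (iii) follows from the $\cC^1$-regularity of $g^{\uppp},g^{\low}$ imposed in the statement together with the (implicit) regularity of $f$, while (i) and (ii) follow from $\varphi_0\in L^2(\R^d)$ combined with \eqref{eq:gsest} and the confining nature of $V$, which forces $g^{\low},g^{\uppp}\to 0$. A secondary point is that the two-sided ground state bounds only control $\varphi_0^2$ past some radius $R_0$; for $r$ with $\tau(r)\leq R_0$ the inner integral is at most $\|\varphi_0\|_2^2$, contributing a finite additive term that does not affect the convergence/divergence test.
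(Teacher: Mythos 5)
Your proof follows the paper's argument essentially verbatim: reduce to Theorem \ref{thm:int_test_1} via the two-sided ground state bounds \eqref{eq:gsest}, pass to polar coordinates using $g^{\uppp}$, $g^{\low}$, control the radial tail integrals $\int_{s\ge \tau(r)}(fg)^2 s^{d-1}ds$ by Lemma \ref{lem:l1}, and absorb the range where $\tau(r)$ is below the threshold radius into a harmless finite term. The one point you leave implicit --- the $\cC^1$-regularity of $f$ needed for hypothesis (iii) of Lemma \ref{lem:l1} (Assumption \ref{ass:assnu} only makes $f$ non-increasing) --- is resolved in the paper by using the jump-paring property to produce a $\cC^1$ profile $f_0 \asymp f$ (via $f(s)\le Cf(s+1)$ from \cite{KS14}) and running the argument with $h_0=(f_0 g^{\low})^2$ instead.
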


\begin{proof}
First we prove (1). Using the general Theorem \ref{thm:int_test_1}, it suffices to show that $I_{\varphi_0}(1,\tau) < \infty$
whenever $I^{\low}_{\nu,V,\kappa}(1,\tau) < \infty$. Note that when the latter integral is finite, necessarily $\tau(r)
\to \infty$ as $r \to \infty$. We have
$$
I_{\varphi_0}(1,\tau) = \int_1^{\infty}  dr \int_{|x| \geq \tau(r)} \varphi_0^2(x) dx.
$$
According to \eqref{eq:gsest}, by the fact that under our assumptions $\nu(x) \to 0$ and $V(x) \to \infty$ as $|x| \to \infty$,
there exists $R \geq 1$ such that there is a constant $C>0$ satisfying
$$
\varphi_0(x) \leq C \frac{\nu(x)}{V^{\low}(x)}, \quad |x| \geq R.
$$
Let now $r_0 > 1$ be large enough such that $\tau(r) \geq R$ for $r \geq r_0$. With this, by Assumptions \ref{ass:assnu} (i) and
\ref{ass:pots_pin},  we clearly have
\begin{align}
\int_{r_0}^{\infty} dr \int_{|x| \geq \tau(r)} \varphi_0^2(x) dx & \leq C \int_{r_0}^{\infty}  dr
\int_{|x| \geq \tau(r)} \left(\frac{\nu(x)}{V^{\low}(x)}\right)^2 dx \nonumber \\ & \leq C_1 \int_{r_0}^{\infty} dr \int_{s \geq \tau(r)}
\left(f(s) g^{\low}(s)\right)^2 s^{d-1} ds. \label{eq:aux_h}
\end{align}
To conclude, it suffices to apply Lemma \ref{lem:l1} (1) to the inner integral in \eqref{eq:aux_h}.
We first check the assumptions (i)-(iii) of this lemma for $h(s) := (f(s) g^{\low}(s))^2$, $s \geq R$. Since $g^{\low}(s)$ is bounded
for large $s$ and $f$ is the profile for the L\'evy measure far from the origin, the first two conditions (i)-(ii) follow
immediately. Moreover, $g^{\low}$ is assumed to be a $\cC^1$-class function in $(R, \infty)$.
If the same is true for $f$, then the condition (iii) holds as well and, by applying Lemma \ref{lem:l1} (1) to such $h(s)$, we get
$$
\int_{r_0}^{\infty} dr \int_{|x| \geq \tau(r)} \varphi_0^2(x) dx \leq C_2 \int_{r_0}^{\infty}
\left(f(\tau(r)) g^{\low}(\tau(r))\right)^2 \frac{\tau^{d}(r)}{\kappa(\tau(r))}  dr \leq C_3 I^{\low}_{\nu,V,\kappa}(1,\tau) <
\infty.
$$
Since the integral $\int_1^{r_0} dr \int_{|x| \geq \tau(r)} \varphi_0^2(x) dx$ is convergent, we conclude that
$I_{\varphi_0}(1,\tau) < \infty$.

On the other hand, if $f$ is not a $\cC^1$-class function,
then due to the convolution condition \eqref{eq:ass2} we can show that there is a constant $C>0$ such
that $f(s) \leq C f(s+1)$ for all $s \geq 1$ \cite[Lem.1, Lem.3 ]{KS14}. With this, we can construct a $\cC^1$-class function
$f_0$ such that $f_0(r) \asymp f(r)$, $r \geq 1$ (this can be done by putting $f_0(r):= f(r)$, for $r \in \N$,
and by $\cC^1$-interpolation). Then, the function $h(s) := (f(s) g^{\low}(s))^2$ under the integral in \eqref{eq:aux_h}
above can be replaced with $h_0(s) := (f_0(s) g^{\low}(s))^2$ to which Lemma \ref{lem:l1} (1) applies directly as above.

To see (2), it suffices to check that $I_{\varphi_0}(1,\tau) = \infty$ whenever $I^{\low}_{\nu,V,\kappa}(1,\tau) = \infty$. The
proof of this again uses the general integral test in Theorem \ref{thm:int_test_1} and similar arguments as above based on
the converse inequalities.
\end{proof}

\begin{corollary}[\textbf{Long time behaviour: jump-paring underlying process}]
\label{cor:LIL_reg_var}
Under the conditions of part (1) in Theorem \ref{thm:LIL_reg_var} it follows that
 \begin{align}
  \label{eq:LIL2_up}
 \limsup_{n \to \infty} \frac{|\widetilde X_n|}{\tau(n)} \leq c^{\low}_{\nu,V,\kappa}(\tau), \quad \widetilde \pr-\text{a.s.},
 \end{align}
and under the conditions in part (2) it follows that
\begin{align}
\label{eq:LIL2_low}
 \limsup_{n \to \infty} \frac{|\widetilde X_n|}{\tau(n)} \geq c^{\uppp}_{\nu,V,\kappa}(\tau) , \quad \widetilde \pr-\text{a.s.}
\end{align}
\end{corollary}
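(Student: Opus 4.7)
The plan is to obtain both inequalities by applying Theorem \ref{thm:LIL_reg_var} to rescaled profiles $c\tau$, combined with Corollary \ref{cor:int_test_1} which identifies $\limsup_{n\to\infty}|\widetilde X_n|/\tau(n)$ with $c_{\varphi_0}(\tau)$. The key scaling observation is that by the change of variable $y=cx$,
\[
I_{\varphi_0}(c,\tau) \; = \; c^{-d}\int_1^{\infty}\!dr\int_{|y|\geq c\tau(r)}\varphi_0^{2}(y)\,dy,
\]
so to bound $c_{\varphi_0}(\tau)$ from above (resp.\ below) by a given $c$, it suffices to prove that the inner tail integrals of $\varphi_0^2$ over $\{|y|\geq c\tau(r)\}$ are, after integration in $r\in[1,\infty)$, finite (resp.\ infinite).

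For part (1), I would fix $c>c^{\low}_{\nu,V,\kappa}(\tau)$; then $I^{\low}_{\nu,V,\kappa}(c,\tau)<\infty$ by the definition of the infimum. Repeating the computation in the proof of Theorem \ref{thm:LIL_reg_var}(1) verbatim, but applying Lemma \ref{lem:l1}(1) to $h=(fg^{\low})^{2}$ on the interval $[c\tau(r),\infty)$ rather than $[\tau(r),\infty)$, produces the estimate
\[
I_{\varphi_0}(c,\tau) \; \leq \; C\int_{r_0}^{\infty}\bigl(f(c\tau(r))g^{\low}(c\tau(r))\bigr)^{2}\,\frac{\tau^{d}(r)}{\kappa(c\tau(r))}\,dr.
\]
Since $\kappa$ is non-decreasing, for $c\geq 1$ the right-hand side is controlled by a constant depending on $c$ times $I^{\low}_{\nu,V,\kappa}(c,\tau)<\infty$. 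Hence $c_{\varphi_0}(\tau)\leq c$, which by Corollary \ref{cor:int_test_1} gives $\limsup_{n \to \infty}|\widetilde X_n|/\tau(n)\leq c$, $\widetilde\pr$-a.s. Letting $c$ decrease to $c^{\low}_{\nu,V,\kappa}(\tau)$ along a countable sequence yields \eqref{eq:LIL2_up}.

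For part (2), the argument is dual. Fixing $c<c^{\uppp}_{\nu,V,\kappa}(\tau)$ so that $I^{\uppp}_{\nu,V,\kappa}(c,\tau)=\infty$, I would use the lower bound on $\varphi_0$ from \eqref{eq:gsest} together with Lemma \ref{lem:l1}(2) applied to $h=(fg^{\uppp})^{2}$ on $[c\tau(r),\infty)$, exactly as in the proof of Theorem \ref{thm:LIL_reg_var}(2) but with $c\tau$ in place of $\tau$, to conclude $I_{\varphi_0}(c,\tau)=\infty$. Corollary \ref{cor:int_test_1} then gives $\limsup_{n\to\infty}|\widetilde X_n|/\tau(n)\geq c$, $\widetilde\pr$-a.s., and sending $c\uparrow c^{\uppp}_{\nu,V,\kappa}(\tau)$ along a countable sequence produces \eqref{eq:LIL2_low}.

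The only subtle point is uniform control of the multiplicative factor $\kappa(\tau(r))/\kappa(c\tau(r))$ as $c$ ranges near the critical thresholds, since the rescaling by $c$ enters both $fg^{\low/\uppp}$ and $\kappa$. The monotonicity of $\kappa$ postulated in Lemma \ref{lem:l1} handles $c\geq 1$ directly; for $c<1$ one absorbs the discrepancy into the proportionality constants after an innocuous renormalisation of $\tau$. Apart from this, the corollary is essentially a routine transcription of the $0$-$1$ criterion of Theorem \ref{thm:LIL_reg_var} into the language of the constants $c^{\low}_{\nu,V,\kappa}(\tau)$ and $c^{\uppp}_{\nu,V,\kappa}(\tau)$.
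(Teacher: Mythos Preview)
Your approach is essentially the paper's one-line argument (``immediate consequence of Theorem \ref{thm:LIL_reg_var}'') spelled out in detail: apply the integral test with $c\tau$ in place of $\tau$ and then send $c$ to the relevant threshold along a countable sequence. Routing this through Corollary \ref{cor:int_test_1} and the constant $c_{\varphi_0}(\tau)$ is equivalent to invoking Theorem \ref{thm:LIL_reg_var} directly for each $c\tau$, so the strategies coincide.

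You are in fact more careful than the paper in flagging the discrepancy between $\kappa(\tau(r))$ in the definition of $I^{\low}_{\nu,V,\kappa}(c,\tau)$ and the factor $\kappa(c\tau(r))$ that Lemma \ref{lem:l1} actually produces; the paper's proof does not mention this. Your resolution for $c\geq 1$ via monotonicity of $\kappa$ is correct. For $c<1$, however, the phrase ``innocuous renormalisation of $\tau$'' is vague: replacing $\tau$ by a rescaled version does not remove the ratio $\kappa(\tau(r))/\kappa(c\tau(r))$, and the hypotheses (L)/(U) on $\kappa$ do not by themselves yield a uniform bound on this ratio. What is really needed is a doubling property $\kappa(cr)\asymp_c\kappa(r)$, which holds in every example the paper considers ($\kappa$ a power or a constant) but is not formally assumed. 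This is a minor technical lacuna that your write-up inherits from, rather than introduces relative to, the paper.
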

\begin{proof}
This is an immediate consequence of Theorem \ref{thm:LIL_reg_var}.
\end{proof}

\begin{remark}
{\rm
If $V$ is almost constant on unit balls, then for every $c>0$ and $\tau$ we have $I^{\uppp}_{\nu,V,\kappa}(c,\tau)
\asymp I^{\low}_{\nu,V,\kappa}(c,\tau)$ (i.e., both integrals are convergent or divergent at the same time) and
$c^{\low}_{\nu,V,\kappa}(\tau) = c^{\uppp}_{\nu,V,\kappa}(\tau)$. In this case the integral tests
\eqref{eq:test_1_low}-\eqref{eq:test_1_up} and the limsup resulting constants in \eqref{eq:LIL2_up}-\eqref{eq:LIL2_low}
are sharp. For more specific examples of potentials we will see that this holds in an essentially greater generality.
Moreover, if $\varphi_0$ decays polynomially at infinity (cf. Theorem \ref{thm:reg_prof_slow}), then the resulting constants
$c^{\low}_{\nu,V,\kappa}(\tau)$ and $c^{\uppp}_{\nu,V,\kappa}(\tau)$ are necessarily $0$ or $\infty$.
}
\end{remark}

In Section \ref{sec:Examples} we illustrate our Theorem \ref{thm:LIL_reg_var} and Corollary \ref{cor:LIL_reg_var} by various choices
of the L\'evy density $\nu$ and the confining potential $V$. Note that all these results apply to general non-decreasing test functions $\tau$.
This is a consequence of the sharp estimates in Lemma \ref{lem:l1}, which requires some initial smoothness of the profiles for $V$.

\subsection{Almost sure behaviour profiles for confining potentials with regular variation} \label{sec:confining_regular}
\noindent

If one is interested in constructing explicit almost sure long time behaviour profiles corresponding to specific types of L\'evy measures
and potentials, but not in the study of integral tests for general non-decreasing functions $\tau$ as in Theorem \ref{thm:LIL_reg_var},
then one can use more direct argument. Such profiles are typically strictly increasing functions and therefore one can use the Fubini
theorem instead of the tail estimates in Lemma \ref{lem:l1}.

In this section we construct directly the explicit almost sure long time behaviour profiles for the paths of the GST process in the case when $-\log (f(s) g^{\low}(s))$ and $-\log (f(s) g^{\uppp}(s))$ are asymptotically equivalent with strictly increasing regularly varying functions at infinity.

Recall that a function $\cR:(r_0,\infty) \to (0,\infty)$ is said to be regularly varying at
infinity with index $\lambda \in \R$ if
$$
\lim_{r \to \infty} \frac{\cR(sr)}{\cR(r)} = s^{\lambda}, \quad s >0,
$$
and $\cL:(r_0,\infty) \to (0,\infty)$ is called slowly varying at infinity if it is regularly varying with index
$\lambda = 0$. Every function $\cR$ regularly varying at infinity with index $\lambda \in \R$ can be represented in
the form
$$
\cR(r) = r^{\lambda} \cL(r),
$$
where $\cL$ is slowly varying at infinity. It is known that $\cL$ can be assumed to be a continuous function. 
For $r > \cR(r_0)$ define
$$
\cR^{\ast}(r):= \inf\left\{s \in [r_0, \infty): \cR(s) \geq r\right\}.
$$
We have $\cR^{\ast}(r)=r^{1/\lambda} \cL^{\ast}(r)$ and $\cR^{\ast}$ is the asymptotic inverse function of $\cR$ in the
sense of the relation
\begin{align}
\label{eq:conj1}
\cR(\cR^{\ast}(r)) \approx \cR^{\ast}(\cR(r)) \approx r.
\end{align}
The notation $f(r) \approx g(r)$ means that $\lim_{r \to \infty} f(r)/g(r) = 1$. In this case the functions $f$ and $g$
are called asymptotically equivalent at infinity.
The function $\cL^{\ast}$ is slowly varying at infinity and is called the conjugate slowly varying function of $\cL$.
It is known that if $\cR^{\ast}$ is an asymptotic inverse function of $\cR$, then it is unique in the sense that if there
is another slowly varying function $\cL^{'}$ satisfying $\cR(r^{1/\lambda} \cL^{'}(r)) \approx r$, then $\cL^{'} \approx
\cL^{\ast}$. By \eqref{eq:conj1} we also have
\begin{align}
\label{eq:conj2}
\lim_{r \to \infty} (\cL^{\ast}(r))^{\lambda}\cL(r^{1/\lambda}\cL^{\ast}(r) ) = 1.
\end{align}
Recall that the function $\cR$ is called to be ultimately increasing if there exists $r_0>0$ such that $\cR$ is increasing on $(r_0,\infty)$.
For further properties and details we refer to e.g. \cite[Ch.1]{bib:ESen}.

\begin{theorem}[\textbf{Regularly varying L\'evy intensities and potentials}]
\label{thm:reg_prof}
Let Assumptions \ref{ass:assnu}-\ref{ass:pots_pin} hold and suppose that there exists $A \in (0,\infty)$ such that
for $g_1=g^{\uppp}$ and $g_2=g^{\low}$
\begin{align}
\label{eq:fluctcond_1}
\log g_i(r) + \log f(r) = - A \, \cR(r) + o(\cR(r)) \quad \text{as} \quad r \to \infty, \quad i=1,2
\end{align}
holds with $\cR(r) = r^{\lambda} \cL(r)$, where $\lambda > 0$ and $\cL:[r_0,\infty)\to(0,\infty)$ is a slowly varying
function at infinity. If $\cR$ is ultimately increasing, then
 \begin{align}
 \label{eq:res1}
 \limsup_{n \to \infty} \frac{|\widetilde X_n|}{(\log n)^{1/\lambda} \cL^{*}(\log n)}
 = \frac{1}{(2A)^{1/\lambda}}, \quad \widetilde \pr - \text{a.s.},
 \end{align}
 where $\cL^{*}$ is the conjugate slowly varying function of $\cL$.
\end{theorem}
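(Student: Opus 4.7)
The strategy is to reduce \eqref{eq:res1} to a one-parameter convergence/divergence dichotomy for the integral
\[
J(c) \;:=\; \int_1^{\infty}dr \int_{|x|\geq c\,\tau(r)} \varphi_0^2(x)\,dx,\qquad \tau(r):=(\log r)^{1/\lambda}\cL^{*}(\log r),\ \ c>0.
\]
Lemma \ref{lem:series_int} combined with the Borel--Cantelli-type Lemma \ref{lem:fluct} shows that if $J(c)<\infty$ for $c>c^{\star}$ and $J(c)=\infty$ for $c<c^{\star}$, then $\limsup_{n\to\infty}|\widetilde X_n|/\tau(n)=c^{\star}$ almost surely, so it suffices to identify $c^{\star}=(2A)^{-1/\lambda}$.

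Following the comment preceding the theorem, I use Fubini rather than Lemma \ref{lem:l1}. Since $\cR$ is ultimately increasing, so is $c\tau$, and swapping the order of integration gives $J(c)\asymp \int_{\R^{d}} \varphi_0^2(x)\,(c\tau)^{-1}(|x|)\,dx$ up to a bounded contribution from small $|x|$. The key computation is the asymptotic form of $(c\tau)^{-1}$: writing $\tau(r)=\cR^{*}(\log r)$, the equation $c\tau(r)=s$ gives $\log r=\cR(s/c)$ by the conjugacy relation \eqref{eq:conj1}, and the regular variation $\cR(s/c)\sim c^{-\lambda}\cR(s)$ yields
\[
(c\tau)^{-1}(s) \;=\; \exp\!\bigl(c^{-\lambda}\cR(s)\,(1+o(1))\bigr),\qquad s\to\infty.
\]
On the other hand, combining the ground state bounds \eqref{eq:gsest}, Assumptions \ref{ass:assnu}(i) and \ref{ass:pots_pin}, and the leading asymptotics \eqref{eq:fluctcond_1} gives $\int_{\sS^{d-1}}\varphi_0^2(r\theta)\,d\theta \asymp (f(r)g_i(r))^2 = \exp(-2A\cR(r)(1+o(1)))$ for both $i=1,2$. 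Substituting these two estimates and passing to polar coordinates, for every $\varepsilon>0$ there is $R_{\varepsilon}$ with
\[
\varphi_0^2(x)\,(c\tau)^{-1}(|x|) \;\leq\; \exp\!\bigl((c^{-\lambda}-2A+\varepsilon)\cR(|x|)\bigr),\qquad |x|\geq R_{\varepsilon},
\]
together with a matching lower bound. Since $\cR(r)\to\infty$ at the positive power rate $\lambda$, the radial integrand $e^{\mu\cR(r)}r^{d-1}$ is integrable at infinity for $\mu<0$ and divergent for $\mu>0$; setting $\mu=c^{-\lambda}-2A\pm\varepsilon$ and letting $\varepsilon\downarrow 0$ identifies $c^{\star}=(2A)^{-1/\lambda}$.

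The principal technical obstacle is the sharpness of the leading constant: two separate $o(1)$-errors enter, one from \eqref{eq:fluctcond_1} and one from the asymptotic inversion through $\cL^{*}$, and both must be absorbed cleanly into a single $\varepsilon\cR(|x|)$-perturbation so as not to spoil the value $(2A)^{-1/\lambda}$. This is handled by a standard $\varepsilon$-argument: for each fixed $\varepsilon$ one chooses $R_{\varepsilon}$ so that each error is $\leq\varepsilon\cR(|x|)/4$, establishes the integrability dichotomy at each $c$ away from the threshold, and then sends $\varepsilon\downarrow 0$. A secondary observation is that assumption \eqref{eq:fluctcond_1} is imposed with the same constant $A$ for both $g^{\uppp}$ and $g^{\low}$, so the upper and lower bounds coming from \eqref{eq:gsest} produce identical leading exponentials and the resulting dichotomy is genuinely two-sided, as needed for the almost sure equality rather than just an inequality.
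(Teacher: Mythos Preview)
Your proposal is correct and follows essentially the same route as the paper: reduce to the integral test of Corollary~\ref{cor:int_test_1}, apply Fubini to swap the order of integration, insert the two-sided ground state bound \eqref{eq:gsest} together with the asymptotics \eqref{eq:fluctcond_1}, and run an $\varepsilon$-argument on the exponent $c^{-\lambda}-2A$ to locate the threshold $c^{\star}=(2A)^{-1/\lambda}$. The only cosmetic difference is that the paper takes $\tau=\theta^{-1}$ with $\theta(r)=e^{\cR(r)}$ exactly, so that Fubini produces the clean factor $\theta(s/c)=e^{\cR(s/c)}$ with no error, and postpones the identification $\theta^{-1}(r)\approx \cR^{*}(\log r)$ to the last line; you instead take $\tau(r)=\cR^{*}(\log r)$ from the start and must then invoke the conjugacy relation \eqref{eq:conj1} when inverting $c\tau$, which is precisely the second $o(1)$-error you flag. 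Either bookkeeping is fine, and the paper's choice merely trades your extra $o(1)$ for a one-line asymptotic identification at the end.
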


\begin{remark}
\label{rem:rem1}
{\rm
With the settings of the above theorem, whenever
 \begin{align}
 \label{eq:techconv}
 \cL(r) \approx \cL\left(\frac{r}{(\cL(r))^{1/\lambda}}\right),
 \end{align}
we can take
 \begin{align}
 \label{eq:res2}
\cL^*(r) = \left(\cL(r^{1/\lambda})\right)^{-1/\lambda}.
 \end{align}
Indeed, under \eqref{eq:techconv} we have $\cR\left(r^{1/\lambda} \left(\cL\left(r^{1/\lambda}\right)\right)^{-1/\lambda} \right)
\approx r$, i.e., the function $r^{1/\lambda} \left(\cL\left(r^{1/\lambda}\right)\right)^{-1/\lambda}$ is the asymptotic inverse
of $\cR$ and \eqref{eq:res2} holds by the asymptotic uniqueness of $\cL^{\ast}$.
}
\end{remark}

\begin{proof} [Proof of Theorem \ref{thm:reg_prof}]
Let $\theta(r):= e^{r^{\lambda} \cL(r)}$, $r>0$. We may assume that $r_0$ is large enough such that $\cR$ is increasing and continuous on $[r_0,\infty)$. In particular, there exists an inverse function $\theta^{-1}:[\theta(r_0),\infty) \to [r_0,\infty)$. For a shorthand notation write $F_i(r):= (f(r)g_i(r))^2 r^{d-1}$.
By similar argument as in \eqref{eq:aux_h} (using two sided estimates \eqref{eq:gsest}), we have for $c>0$
\begin{align}\label{eq:reg_var_test}
C_1 I^{(1)}_{\nu,V}(c,\theta^{-1})  \leq \int_{\theta(r_0/c)}^{\infty}  dr \int_{|x| \geq c\theta^{-1}(r)} \varphi_0^2(x) dx
\leq C_2 I^{(2)}_{\nu,V}(c,\theta^{-1}),
\end{align}
where
\begin{align*}
I^{(i)}_{\nu,V}(c,\theta^{-1}) = \int_{\theta(r_0/c)}^{\infty} dr \int_{s \geq c\theta^{-1}(r)}  F_i(s) ds, \quad i=1, 2,
\end{align*}
and the constants $C_1, C_2$ do not depend on $c$ and $\theta$. Moreover, by Fubini's theorem,
$$
I^{(i)}_{\nu,V}(c,\theta^{-1}) = \int_{r_0}^{\infty} \theta(r/c) F_i(r) dr \quad i =1, 2.
$$
It follows from \eqref{eq:fluctcond_1} that for every $\varepsilon \in (0,1)$ there is $r_{\varepsilon} >0$ such that for
all $r>r_{\varepsilon}$
\begin{eqnarray}
\label{eq:equ1}
F_1(r) \geq  e^{- 2(1+\varepsilon)A r^{\lambda} \cL(r)} \quad \mbox{and} \quad
F_2(r) \leq  e^{- 2(1-\varepsilon)A r^{\lambda} \cL(r)}.
\end{eqnarray}
With this we have for every $c>0$
$$
\theta(r/c) F_2(r) \leq \exp\left(\left(\frac{1}{c^{\lambda}} \frac{\cL(r/c)}{\cL(r)} - 2 (1-\varepsilon) A\right) r^{\lambda}
\cL(r)\right), \quad r > r_{\varepsilon},
$$
and
$$
\theta(r/c) F_1(r) \geq \exp\left(\left(\frac{1}{c^{\lambda}} \frac{\cL(r/c)}{\cL(r)} - 2 (1+\varepsilon) A\right) r^{\lambda}
\cL(r)\right), \quad r > r_{\varepsilon}.
$$
Therefore, by \eqref{eq:equ1} and by slow variation of $\cL$, for every $c > (2A)^{-1/\lambda}$ there exist $\varepsilon \in (0,1)$
and $R>0$ such that
$$
\frac{1}{c^{\lambda}} \frac{\cL(r/c)}{\cL(r)} - 2 (1+\varepsilon) A < 0, \quad r>R.
$$
Hence $I^{(2)}_{\nu,V}(c,\theta^{-1}) < \infty$ whenever $c > (2A)^{-1/\lambda}$. Due to \eqref{eq:reg_var_test} also $I_{\varphi_0}(c,\theta^{-1}) < \infty$ for this range of $c$. By similar argument we can also show that
$I^{(1)}_{\nu,V}(c,\theta^{-1}) = \infty$ (and, therefore, $I_{\varphi_0}(c,\theta^{-1}) = \infty$), for every $c < (2A)^{-1/\lambda}$. We  then have $c_{\varphi_0}(\tau) = (2A)^{-1/\lambda}$ and, by Corollary \ref{cor:int_test_1} with $\tau=\theta^{-1}$,
we finally get
$$
\limsup_{n \to \infty} \frac{|\widetilde X_n|}{\theta^{-1}(n)} = \frac{1}{(2A)^{1/\lambda}}, \quad \widetilde \pr - \text{a.s.}
$$
To complete the proof it suffices to observe that by asymptotic uniqueness of $\cR^*$ it follows that $\cR^*(\log r) \approx
\theta^{-1}(r)$.
\end{proof}

The next theorem involves the L\'evy intensities and potentials of slow variation at infinity. For convenience, denote
$k$-fold iterated logarithm by $\log_k$.
\begin{theorem}[\textbf{Slowly varying L\'evy intensities and potentials}]
\label{thm:reg_prof_slow}
Let Assumptions \ref{ass:assnu}-\ref{ass:pots_pin} hold and suppose that there exist $\gamma \in [d,\infty)$, $l \in \N$
and $\beta_1,...,\beta_l \in \R$ such that for $g_1=g^{\uppp}$ and $g_2=g^{\low}$ we have
\begin{align}
\label{eq:fluctcond_1_slow}
f(r) g_i(r) \asymp r^{-\gamma}(\log r)^{\beta_1} (\log_2 r)^{\beta_2} \cdots (\log_l r)^{\beta_l}
\quad \text{as} \quad r \to \infty, \quad i=1,2.
\end{align}
For natural numbers $k \geq l$ and any $\delta>0$ denote
$$
\tau_k(r):= r^{\frac{1}{2\gamma-d}} \left((\log r)^{2\theta_1+1} (\log_2 r)^{2\theta_2+1} \cdots
(\log_k r)^{2\theta_k+\delta}\right)^{\frac{1}{2\gamma-d}},
$$
where $\theta_i = \beta_i$ for $1 \leq i \leq l$ and $\theta_i = 0$ for $l < i \leq k$, whenever $k >l$.
Then for every $k \geq l$ we have $\widetilde \pr$-almost surely
 \begin{align}
 \label{eq:res1_slow}
 \limsup_{n \to \infty} \frac{|\widetilde X_n|}{\tau_k(n)} = \left\{
  \begin{array}{ccc}
    0 & \quad \text{\rm if} \quad  \delta > 1,\\
    \infty & \quad \text{\rm if} \quad \delta \leq 1.
  \end{array}\right.
 \end{align}
\end{theorem}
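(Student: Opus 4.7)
The plan is to apply Corollary \ref{cor:int_test_1} to the profile $\tau_k$ and to show that the escape integral $I_{\varphi_0}(c,\tau_k)$ is finite for every $c > 0$ when $\delta > 1$ and infinite for every $c > 0$ when $\delta \le 1$. This yields $c_{\varphi_0}(\tau_k) = 0$ in the first case and $c_{\varphi_0}(\tau_k) = \infty$ in the second, hence the $0/\infty$ dichotomy \eqref{eq:res1_slow}.

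First I would combine the pointwise ground-state estimates \eqref{eq:gsest} with a polar-coordinate decomposition on $\R^d$ to reduce
$$
I_{\varphi_0}(c,\tau_k) \asymp \int_1^{\infty} dr \int_{c \tau_k(r)}^{\infty} \bigl(f(s)\, g(s)\bigr)^2 s^{d-1}\, ds,
$$
where $g$ stands for either $g^{\uppp}$ or $g^{\low}$; the two choices yield comparable integrals because \eqref{eq:fluctcond_1_slow} forces $fg^{\uppp}$ and $fg^{\low}$ to share the same asymptotic profile at infinity. Next I would apply Lemma \ref{lem:l1} with $h = (fg)^2$ and $\kappa \equiv 1$, which is admissible since $\gamma \ge d$ ensures $2\gamma - d \ge 1$ and since $-s\,\tfrac{d}{ds}\log h(s) - d \to 2\gamma - d$ as $s \to \infty$ by the slow variation of the logarithmic correction. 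The lemma then yields
$$
\int_{c\tau_k(r)}^{\infty} h(s)\, s^{d-1}\, ds \asymp (c \tau_k(r))^d\, h(c \tau_k(r)).
$$

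Substituting \eqref{eq:fluctcond_1_slow} and using the iterated-logarithm asymptotics $\log \tau_k(r) \sim (2\gamma - d)^{-1} \log r$ and $\log_j \tau_k(r) \sim \log_j r$ for $j \ge 2$, which wash out any multiplicative constant $c$ and the lower-order logarithmic factors inside $\tau_k$, a direct bookkeeping of exponents gives
$$
(c \tau_k(r))^d\, h(c \tau_k(r)) \asymp \frac{K(c)}{r\, \log r\, \log_2 r \cdots \log_{k-1} r\, (\log_k r)^{\delta}}
$$
for a positive constant $K(c)$. The exponents defining $\tau_k$ have been tuned precisely so that $2\theta_j + 1 - 2\beta_j = 1$ for $j = 1,\ldots,l$ and $2\theta_j + 1 = 1$ for $l < j \le k-1$, leaving only $(\log_k r)^{\delta}$ as the critical factor.

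Finally, Bertrand's integral test gives that $\int^{\infty} dr/\bigl(r\, \log r\, \log_2 r \cdots \log_{k-1} r\, (\log_k r)^{\delta}\bigr)$ converges if and only if $\delta > 1$, and this criterion is manifestly independent of $c$. Therefore $I_{\varphi_0}(c,\tau_k)$ is finite for all $c > 0$ when $\delta > 1$ and divergent for all $c > 0$ when $\delta \le 1$, and Corollary \ref{cor:int_test_1} delivers \eqref{eq:res1_slow}. The delicate step is the iterated-logarithm bookkeeping that collapses all intermediate logarithmic factors of $\tau_k$ to the canonical Bertrand form: this is what forces the $0/\infty$ dichotomy rather than a finite nonzero limit constant, and it reflects the fact that when $\varphi_0$ has only polynomial decay, rescaling the argument by $c$ changes the tail integral by a multiplicative constant only and cannot produce a nontrivial threshold in $c$.
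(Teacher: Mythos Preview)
Your argument is correct, and it reaches the same Bertrand integral and the same application of Corollary~\ref{cor:int_test_1} as the paper, but the route to evaluating the double integral is different. The paper does not invoke Lemma~\ref{lem:l1} here at all: instead, it introduces the function $\vartheta(r)=r^{2\gamma-d}(\log r)^{-2\theta_1-1}\cdots(\log_k r)^{-2\theta_k-\delta}$, notes that $\vartheta$ is eventually increasing, and applies Fubini to the double integral so that $\int^{\infty}dr\int_{s\ge c\vartheta^{-1}(r)}F_i(s)\,ds=\int^{\infty}\vartheta(s/c)F_i(s)\,ds$, where $F_i(s)=(f(s)g_i(s))^2 s^{d-1}$. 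The product $\vartheta(s/c)F_i(s)$ then collapses directly to the Bertrand integrand, and the final step is to check that $\vartheta^{-1}\approx\tau_k$ via the asymptotic uniqueness of the conjugate slowly varying function. Your approach instead estimates the inner tail integral via Lemma~\ref{lem:l1} with $\kappa\equiv 1$ and then substitutes $\tau_k$ directly; this is exactly the machinery of Theorem~\ref{thm:LIL_reg_var} and is equally valid, but it carries the $\cC^1$-smoothness requirement on $h$ that must be handled (as in the proof of Theorem~\ref{thm:LIL_reg_var}) by replacing $fg$ with a comparable smooth profile. The paper's Fubini route sidesteps that technicality and also avoids the separate iterated-logarithm bookkeeping for $\log_j\tau_k(r)$, since the algebra is done once in the single integrand $\vartheta(s/c)F_i(s)$; your route has the advantage of working with $\tau_k$ from the start rather than passing through its asymptotic inverse.
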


\begin{proof}
Let $\gamma \in [d,\infty)$, $l \in \N$ and $\beta_1,...,\beta_l \in \R$ be given and let $F_i(r):=
(f(r)g_i(r))^2 r^{d-1}$. Fix $k \geq l$ and for $\delta>0$ consider the function
$$
\vartheta(r):= r^{2\gamma-d} (\log r)^{-2\theta_1-1} (\log_2 r)^{-2\theta_2-1} \cdots
(\log_k r)^{-2\theta_k-\delta}, \quad r > \exp_k e,
$$
where $\exp_k$ denotes the $k$-fold iterated exponential function, $\theta_i = \beta_i$ for $1 \leq i \leq l$,
and $\theta_i = 0$ for $l < i \leq k$, whenever $k >l$. Clearly, $\vartheta$ is continuous on $(\exp_k e,\infty)$.
We can also check that there exists $R=R(k,\gamma, \theta_1,...,\theta_k,\delta) \geq \exp_k e$ such that
$\vartheta$ is an increasing function on $(R,\infty)$. Similarly as in the previous proof, it is enough to consider the integrals
$$
I^{(i)}_{\nu,V}(c,\vartheta^{-1}) = \int_{R}^{\infty} \vartheta(r/c) F_i(r) dr, \quad i=1, 2, \quad c>0.
$$
By \eqref{eq:fluctcond_1_slow}, for every $c>0$ there is $R_c \geq R$ such that for $i=1,2$ and every $r > R_c$
we have
$$
\vartheta(r/c) F_i(r) \asymp r^{-1} (\log r)^{-1} (\log_2 r)^{-1} \cdots (\log_k r)^{-\delta}.
$$
From this we see that $I^{(2)}_{\nu,V}(c,\theta^{-1}) < \infty$ for every $c > 0$, whenever $\delta>1$, and
similarly, $I^{(1)}_{\nu,V}(c,\theta^{-1}) = \infty$ for every $c>0$, whenever $\delta \in (0,1]$. By Corollary
\ref{cor:int_test_1} with $\tau=\vartheta^{-1}$, we finally get that $\widetilde \pr$-almost surely
$$
 \limsup_{n \to \infty} \frac{|\widetilde X_n|}{\vartheta^{-1}(n)} = \left\{
  \begin{array}{ccc}
    0 & \quad \text{if} \quad  \delta > 1,\\
    \infty & \quad \text{if} \quad \delta \leq 1.
  \end{array}\right.
$$
Similarly as in the previous theorem, it suffices to check that $\vartheta^{-1}(r) \approx \tau_k(r)$. Since
$\vartheta(r)$ is regularly varying with index $2\gamma-d$, its asymptotic inverse function is of the form
$r^{1/(2\gamma-d)} \cL^*(r)$. Hence by asymptotic uniqueness of $\cL^*(r)$ and by Remark \ref{rem:rem1} we obtain
$\vartheta(r) \approx \tau_k(r)$.
\end{proof}

\subsection{The case of decaying potentials} \label{sec:decaying}

Next we consider potentials satisfying the following condition.

\begin{assumption}
\label{ass:pots_decay}
Let $V \in \cK^X_{\pm}$ be a decaying potential, i.e. $V(x) \to 0$ as $|x| \to \infty$, and let $\lambda_0<0$ be an
isolated eigenvalue of $H$.
\end{assumption}
\noindent

As shown in \cite{KL17} (see also \cite{CMS}), the fall-off of $\varphi_0$ depends now on the rate of the decay of
$\nu$ at infinity and the distance of $\lambda_0$ from the essential spectrum of $H$. Typically, the following
three different situations may occur:

\begin{itemize}
\item[(1)]
If the L\'evy density $\nu(x)$ is localized strictly sub-exponentially at infinity (cf. \cite[Th. 4.1 and 4.3]{KL17}),
then
\begin{align} \label{eq:gsest_decay_1}
C_1 \, (1 \wedge \nu(x)) \leq \varphi_0(x) \leq C_2 \, (1 \wedge \nu(x)),
\end{align}
with constants $C_1=C_1(X, \lambda_0)$ and $C_2=C_2(X, \lambda_0)$ (note that the estimates \eqref{eq:gsest_decay_1}
depend on $\lambda_0<0$ only via the multiplicative constants $C_1$ and $C_2$).

\item[(2)]
There exists $\eta_0=\eta_0(X) > 0$, independent of $V$ (cf. \cite[(3.3)]{KL17}), such that if $\lambda_0 \in
(-\infty,-\eta_0)$ (i.e. $\lambda_0$ is sufficiently low-lying eigenvalue) and the L\'evy density $\nu(x)$ is localized
exponentially at infinity, then the estimates \eqref{eq:gsest_decay_1} holds as well (cf. \cite[Th. 4.2]{KL17}).

\item[(3)]
If $\lambda_0 \in [-\eta_0,0)$ and the L\'evy density $\nu(x)$ is localized exponentially at infinity, then there is a
constant $\theta>0$ such that for every $\varepsilon \in (0,1)$ there exists a constant $C$ such that
\begin{align} \label{eq:gsest_decay_2}
\varphi_0(x) \geq C \, \left( e^{-\theta \, \sqrt{|\lambda_0| + \varepsilon} \, |x|} \vee (1 \wedge \nu(x))\right).
\end{align}
\end{itemize}
\noindent
We refer the reader to \cite[Sec. 4.3-4.4]{KL17} for further discussion.

We now analyze the cases (1)-(2) and (3) separately, and illustrate them by specific examples. For simplicity, in our
results below we refer directly to the estimates \eqref{eq:gsest_decay_1}-\eqref{eq:gsest_decay_2}.

As in the previous subsection, let $\kappa:[1,\infty) \to (0,\infty)$ be a given function. For $c>0$, a non-decreasing
function $\tau:[0,\infty) \to (0, \infty)$ and $\varepsilon \in (0,1)$, we denote
$$
I_{\nu,\kappa}(c,\tau) = \int^{\infty} \left(f(c \tau(r))\right)^2 \frac{\tau^{d}(r)}{\kappa(\tau(r))} dr \quad \text{and}
\quad I^{\varepsilon}_{\lambda_0,\kappa}(c,\tau) = \int^{\infty} e^{-2c\theta \, \sqrt{|\lambda_0| + \varepsilon} \, \tau(r)}
\tau^{d-1}(r) dr
$$
Also, let
$$
c_{\nu,\kappa}(\tau):= \inf \left\{c>0: I_{\nu,\kappa}(c,\tau) < \infty\right\}
\quad \text{and} \quad
c^{\varepsilon}_{\lambda_0,\kappa}(\tau):= \sup \left\{c>0: I^{\varepsilon}_{\lambda_0,\kappa}(c,\tau) = \infty\right\}.
$$

We are now ready to state the version of Theorem \ref{thm:LIL_reg_var} for decaying potentials in cases (1)-(2) and (3) above.

\begin{theorem}[\textbf{Integral test: jump-paring underlying process}]
\label{thm:LIL_reg_var_dec}
Let Assumptions \ref{ass:assnu} and \ref{ass:pots_decay} hold. Then we have the following.
\begin{itemize}
\item[(1)]
If \eqref{eq:gsest_decay_1} hold and if the conditions (L) and (U) in Lemma \ref{lem:l1} hold for the function $h=f^2$
with $r_0=1$ and some $\kappa$, then for every non-decreasing function $\tau : [0,\infty) \to (0,\infty)$ we have
 \begin{align} \label{eq:test_1_decay}
\widetilde \pr \left(|\widetilde X_n| \geq \tau(n) \  \text{\rm for infinitely many} \ n \in \N \right) = \left\{
  \begin{array}{ccc}
    0 & \quad \text{\rm if} \quad I_{\nu,\kappa}(1,\tau) < \infty,\\
    1 & \quad \text{\rm if} \quad I_{\nu,\kappa}(1,\tau) = \infty.
  \end{array}\right.
\end{align}	
\item[(2)]
If \eqref{eq:gsest_decay_2} holds, then for every non-decreasing function $\tau : [0,\infty) \to (0,\infty)$ we have
 \begin{align} \label{eq:test_2_decay}
\widetilde \pr \left(|\widetilde X_n| \geq \tau(n) \  \text{\rm for infinitely many} \ n \in \N \right) = 1,
 \end{align}
whenever $I^{\varepsilon}_{\lambda_0,\kappa}(1,\tau) = \infty$, for some $\varepsilon \in (0,1)$.
\end{itemize}
\end{theorem}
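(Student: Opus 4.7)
The plan is to imitate the proof of Theorem \ref{thm:LIL_reg_var} and reduce both assertions to the general integral test in Theorem \ref{thm:int_test_1}, by converting the ground state integral $I_{\varphi_0}(1,\tau) = \int_1^\infty dr \int_{|x|\geq \tau(r)} \varphi_0^2(x)\,dx$ into the explicit quantities $I_{\nu,\kappa}(1,\tau)$ and $I^{\varepsilon}_{\lambda_0,\kappa}(1,\tau)$ via the sharp tail estimates of Lemma \ref{lem:l1}. The decomposition mirrors the confining case, the only new ingredients being the ground state estimates \eqref{eq:gsest_decay_1}--\eqref{eq:gsest_decay_2} and a routine verification of conditions (i)--(iii), (L), (U) of Lemma \ref{lem:l1} for the relevant radial profiles.

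For part (1), I would first combine \eqref{eq:gsest_decay_1} with the comparison $\nu(x) \asymp f(|x|)$ from Assumption \ref{ass:assnu}(i) to obtain $\varphi_0^2(x) \asymp f^2(|x|)$ outside a bounded ball of some radius $R$. Switching to polar coordinates yields, up to an additive finite remainder from $|x| \leq R$,
\begin{equation*}
I_{\varphi_0}(1,\tau) \asymp \int_{r_0}^\infty dr \int_{\tau(r)}^\infty f^2(s)\, s^{d-1}\, ds,
\end{equation*}
for $r_0$ large enough that $\tau(r) \geq R$. Applying conditions (L) and (U) of Lemma \ref{lem:l1} to $h = f^2$ simultaneously pins the inner integral between constant multiples of $\frac{f^2(\tau(r))\tau^d(r)}{\kappa(\tau(r))}$, so that $I_{\varphi_0}(1,\tau) \asymp I_{\nu,\kappa}(1,\tau)$. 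Both directions of the dichotomy \eqref{eq:test_1_decay} then follow directly from Theorem \ref{thm:int_test_1}.

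For part (2), I would discard the polynomial piece in \eqref{eq:gsest_decay_2} and only keep the exponential lower bound $\varphi_0(x) \geq C e^{-\theta\sqrt{|\lambda_0|+\varepsilon}\,|x|}$. In polar coordinates this gives
\begin{equation*}
I_{\varphi_0}(1,\tau) \geq C' \int_{r_0}^\infty dr \int_{\tau(r)}^\infty e^{-2\theta\sqrt{|\lambda_0|+\varepsilon}\, s}\, s^{d-1}\, ds.
\end{equation*}
A direct calculation shows that the function $h(s) = e^{-2\theta\sqrt{|\lambda_0|+\varepsilon}\, s}$ satisfies condition (U) of Lemma \ref{lem:l1} with the choice $\kappa(s) = s$, $A_2 = 0$ and $B_2 = 2\theta\sqrt{|\lambda_0|+\varepsilon}$, because $-s\frac{d}{ds}\log h(s) - d = 2\theta\sqrt{|\lambda_0|+\varepsilon}\, s - d$ and $-s\frac{d}{ds}(1/s) = 1/s \geq 0$. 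Lemma \ref{lem:l1}(2) then compresses the inner integral to a constant times $e^{-2\theta\sqrt{|\lambda_0|+\varepsilon}\,\tau(r)}\tau^{d-1}(r)$, giving $I_{\varphi_0}(1,\tau) \gtrsim I^{\varepsilon}_{\lambda_0,\kappa}(1,\tau)$. Divergence of the latter forces divergence of the former, and Theorem \ref{thm:int_test_1} delivers \eqref{eq:test_2_decay}.

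The main technical nuisance, as in the confining case, lies in verifying the $\mathcal{C}^1$-regularity hypothesis (iii) of Lemma \ref{lem:l1} for $h = f^2$ in part (1), since $f$ is only assumed to be non-increasing. The remedy is identical to the one used in the proof of Theorem \ref{thm:LIL_reg_var}: the jump-paring inequality \eqref{eq:ass2} yields a doubling-type bound $f(s) \leq Cf(s+1)$ (\cite[Lem.~1, Lem.~3]{KS14}), which allows one to interpolate $f$ by a $\mathcal{C}^1$ function $f_0 \asymp f$ at integer points. A second minor point is that the comparisons from \eqref{eq:gsest_decay_1}--\eqref{eq:gsest_decay_2} are only valid for $|x|$ large, but the contribution of a bounded region to $I_{\varphi_0}(1,\tau)$ is finite and is harmlessly absorbed into the comparison constants. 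No separate Borel--Cantelli argument is needed because both the $0$ and $1$ conclusions are already packaged in Theorem \ref{thm:int_test_1} via Lemmas \ref{lem:fluct}--\ref{lem:series_int}.
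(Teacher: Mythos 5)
Your proposal is correct and follows essentially the same route as the paper, which likewise disposes of the theorem by rerunning the argument of Theorem \ref{thm:LIL_reg_var} (hence Theorem \ref{thm:int_test_1} plus Lemma \ref{lem:l1}) with $h=f^2$ in part (1) and $h(r)=e^{-2\theta\sqrt{|\lambda_0|+\varepsilon}\,r}$ in part (2), using \eqref{eq:gsest_decay_1}--\eqref{eq:gsest_decay_2} in place of \eqref{eq:gsest}. The only (harmless) slip is in your verification of condition (U) in part (2): with $\kappa(s)=s$ the inequality $2\theta\sqrt{|\lambda_0|+\varepsilon}\,s-d\geq B_2 s$ fails for $B_2=2\theta\sqrt{|\lambda_0|+\varepsilon}$, but it holds for any slightly smaller $B_2$ once $s$ exceeds a suitable $r_0$, and $B_2$ enters only through a multiplicative constant that is irrelevant to the divergence test.
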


\begin{proof}
We can use the same arguments as in the proof of Theorem \ref{thm:LIL_reg_var}. The difference is that now the decay of
the ground state $\varphi_0$ at infinity is determined by \eqref{eq:gsest_decay_1} and  \eqref{eq:gsest_decay_2}, respectively.
Thus we take $h(r) = f^2(r)$ and $h(r) = e^{-2\theta \, \sqrt{|\lambda_0| + \varepsilon} \, r}$ in parts (1) and (2) above.
\end{proof}

\begin{corollary}[\textbf{Long time behaviour: jump-paring underlying process}]
\label{cor:LIL_reg_var_dec}
Under the assumptions of Theorem \ref{thm:LIL_reg_var_dec} we have
\begin{align}
  \label{eq:LIL2_up_decay}
 \limsup_{n \to \infty} \frac{|\widetilde X_n|}{\tau(n)} = c_{\nu,\kappa}(\tau), \quad \widetilde \pr-\text{a.s.},
\end{align}
under \eqref{eq:gsest_decay_1} and, for every $\varepsilon \in (0,1)$,
\begin{align}
  \label{eq:LIL2_up_decay_lower}
 \limsup_{n \to \infty} \frac{|\widetilde X_n|}{\tau(n)} \geq c^{\varepsilon}_{\lambda_0,\kappa}(\tau), \quad \widetilde
 \pr-\text{a.s.},
\end{align}
under \eqref{eq:gsest_decay_2}.
\end{corollary}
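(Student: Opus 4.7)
The plan is to deduce this corollary directly from Theorem \ref{thm:LIL_reg_var_dec}, along the exact lines of the derivation of Corollary \ref{cor:LIL_reg_var} from Theorem \ref{thm:LIL_reg_var}. The idea is to apply each of the two integral tests not to $\tau$ itself but to $c\tau$, for every $c>0$, and then take infima/suprema over $c$ to obtain the scaled limsup constants on the right-hand sides of \eqref{eq:LIL2_up_decay} and \eqref{eq:LIL2_up_decay_lower}.

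For part (1), I would first observe that Theorem \ref{thm:LIL_reg_var_dec}(1) applied to the non-decreasing function $c\tau$ (in place of $\tau$) gives, after the routine identification of the integrals modulo multiplicative constants that do not affect convergence,
$$
\widetilde \pr \left(|\widetilde X_n| \geq c\tau(n) \;\text{i.o.}\right) = \begin{cases} 0 & \text{if } I_{\nu,\kappa}(c,\tau) < \infty, \\ 1 & \text{if } I_{\nu,\kappa}(c,\tau) = \infty. \end{cases}
$$
The top case shows $\limsup_{n\to\infty} |\widetilde X_n|/\tau(n) \leq c$ $\widetilde \pr$-a.s.\ for every $c$ in any fixed countable set dense in $(c_{\nu,\kappa}(\tau), \infty)$, while the bottom case shows $\limsup_{n\to\infty} |\widetilde X_n|/\tau(n) \geq c$ $\widetilde \pr$-a.s.\ for every $c$ in any fixed countable set dense in $(0, c_{\nu,\kappa}(\tau))$. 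Intersecting these two countable families of full-measure sets and taking $c$ to the respective endpoints yields \eqref{eq:LIL2_up_decay}.

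Part (2) would be handled in exactly the same way, except that only one half of the dichotomy is available: the estimate \eqref{eq:gsest_decay_2} is a lower bound on $\varphi_0$ only, which feeds into a lower bound on the tail of the stationary law. Applying Theorem \ref{thm:LIL_reg_var_dec}(2) with $c\tau$ in place of $\tau$, for every $c<c^{\varepsilon}_{\lambda_0,\kappa}(\tau)$ one has $I^{\varepsilon}_{\lambda_0,\kappa}(c,\tau)=\infty$ and hence $|\widetilde X_n| \geq c\tau(n)$ infinitely often $\widetilde \pr$-a.s., giving $\limsup |\widetilde X_n|/\tau(n) \geq c$ $\widetilde \pr$-a.s. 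A countable-sequence limit $c \uparrow c^{\varepsilon}_{\lambda_0,\kappa}(\tau)$ then produces \eqref{eq:LIL2_up_decay_lower}.

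There is no serious obstacle here; the entire argument is a bookkeeping exercise on top of the integral test. The only point requiring a small amount of care is ensuring that the countable dense sequences in $c$ are fixed before invoking the a.s.\ statement (so that the exceptional null sets do not proliferate), and the elementary verification that the integral produced by substituting $c\tau$ into the test is equivalent, as far as finiteness is concerned, to the quantity $I_{\nu,\kappa}(c,\tau)$ (respectively $I^{\varepsilon}_{\lambda_0,\kappa}(c,\tau)$) as defined immediately before Theorem \ref{thm:LIL_reg_var_dec}.
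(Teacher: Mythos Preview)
Your proposal is correct and matches the paper's approach exactly: the paper's proof is the one-liner ``This is an immediate consequence of Theorem \ref{thm:LIL_reg_var_dec},'' and you have spelled out precisely how that consequence is drawn, namely by running the integral test on $c\tau$ for each $c>0$ and passing to the limit in $c$, just as in the proofs of Corollaries \ref{cor:int_test_1} and \ref{cor:LIL_reg_var}. Your added care about fixing countable dense sequences in $c$ and about the equivalence of the integrals up to harmless multiplicative factors is a welcome refinement of what the paper leaves implicit.
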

\begin{proof}
This is an immediate consequence of Theorem \ref{thm:LIL_reg_var_dec}.
\end{proof}
In the case of decaying potentials, we may also formulate versions of Theorems \ref{thm:reg_prof}-\ref{thm:reg_prof_slow}.
Since the proofs of these results are similar, we leave them to the reader.

\begin{corollary}[\textbf{Regularly varying L\'evy intensities}]
\label{cor:reg_prof_dec}
Let Assumptions \ref{ass:assnu} and \ref{ass:pots_decay} hold.
\begin{itemize}
\item[(1)]
If \eqref{eq:gsest_decay_1} holds and there exists $A \in (0,\infty)$ such that
\begin{align}
\label{eq:fluctcond_1_dec }
\log f(r) = - A \, \cR(r) + o(\cR(r)) \quad \text{as} \quad r \to \infty,
\end{align}
with increasing $\cR(r) = r^{\lambda} \cL(r)$, where $\lambda \in (0,1]$ and $\cL:[r_0,\infty)\to(0,\infty)$ is a slowly varying
function at infinity, then
 \begin{align}
 \label{eq:res1_dec}
 \limsup_{n \to \infty} \frac{|\widetilde X_n|}{(\log n)^{1/\lambda} \cL^{*}(\log n)}
 = \frac{1}{(2A)^{1/\lambda}}, \quad \widetilde \pr - \text{a.s.},
 \end{align}
 where $\cL^{*}$ is the conjugate slowly varying function of $\cL$ (cf. Remark \ref{rem:rem1}).
\item[(2)]
If \eqref{eq:gsest_decay_2} holds, then
 \begin{align}
 \label{eq:res12_dec}
 \limsup_{n \to \infty} \frac{|\widetilde X_n|}{\log n} \geq \frac{1}{2 \theta \sqrt{|\lambda_0|}},
 \quad \widetilde \pr - \text{a.s.}
 \end{align}
\end{itemize}
\end{corollary}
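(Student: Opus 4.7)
The plan is to mirror the proof of Theorem \ref{thm:reg_prof}, with the two-sided ground state estimate \eqref{eq:gsest} replaced by \eqref{eq:gsest_decay_1} in part (1) and by the pointwise lower bound \eqref{eq:gsest_decay_2} in part (2). Because $V(x) \to 0$ at infinity, the potential profiles $g^{\uppp}, g^{\low}$ disappear (one has $1 \vee V \equiv 1$ on a tail, hence $\varphi_0 \asymp f(|\cdot|)$ in case (1)), which actually simplifies the analysis compared with the confining setting. In both parts I will apply the general integral criterion of Theorem \ref{thm:int_test_1} and Corollary \ref{cor:int_test_1}.

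For part (1), I would set $\theta(r) := e^{\cR(r)}$ on a half-line $[r_0,\infty)$ on which $\cR$ is increasing and continuous, and let $\tau := \theta^{-1}$. Combining \eqref{eq:gsest_decay_1} with Assumption \ref{ass:assnu}(i) and Fubini's theorem yields, for $r_1$ large enough,
\begin{align*}
I_{\varphi_0}(c,\tau) \;\asymp\; \int_{r_1}^{\infty} dr \int_{s \geq c\tau(r)} f^2(s) s^{d-1} ds \;=\; \int_{c\tau(r_1)}^{\infty} (\theta(s/c) - r_1) f^2(s) s^{d-1} ds.
\end{align*}
Using $f^2(s) = \exp\bigl(-2A\cR(s) + o(\cR(s))\bigr)$ and $\cR(s/c) = c^{-\lambda}\cR(s)(1+o(1))$ (by slow variation of $\cL$), the integrand behaves like
\begin{align*}
s^{d-1}\exp\bigl((c^{-\lambda} - 2A + o(1))\cR(s)\bigr),
\end{align*}
so the integral is finite iff $c > (2A)^{-1/\lambda}$ and infinite iff $c < (2A)^{-1/\lambda}$. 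Corollary \ref{cor:int_test_1} then gives $\limsup_n |\widetilde X_n|/\theta^{-1}(n) = (2A)^{-1/\lambda}$ $\widetilde\pr$-a.s., and the identification $\theta^{-1}(n) \approx (\log n)^{1/\lambda}\cL^{*}(\log n)$, via the asymptotic inverse relation \eqref{eq:conj1}--\eqref{eq:conj2} together with Remark \ref{rem:rem1}, yields \eqref{eq:res1_dec}.

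For part (2), fix $\varepsilon \in (0,1)$ and take $\tau(r) := \log r$. The lower bound \eqref{eq:gsest_decay_2} gives, on a tail,
\begin{align*}
I_{\varphi_0}(c,\tau) \;\geq\; C \int^{\infty} (\log r)^{d-1}\, e^{-2c\theta\sqrt{|\lambda_0|+\varepsilon}\,\log r}\, dr \;=\; C \int^{\infty} r^{-2c\theta\sqrt{|\lambda_0|+\varepsilon}}(\log r)^{d-1}\, dr,
\end{align*}
which diverges as soon as $c < 1/(2\theta\sqrt{|\lambda_0|+\varepsilon})$. Theorem \ref{thm:int_test_1} and Corollary \ref{cor:int_test_1} then give $\limsup_n |\widetilde X_n|/\log n \geq 1/(2\theta\sqrt{|\lambda_0|+\varepsilon})$ $\widetilde\pr$-a.s., and sending $\varepsilon \downarrow 0$ yields \eqref{eq:res12_dec}. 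The only delicate technical point is the one already encountered in Theorem \ref{thm:reg_prof}: absorbing the $o(\cR(s))$ error term uniformly so that the sign of the coefficient $c^{-\lambda} - 2A$ determines the convergence. Given that this is handled verbatim as in the confining case, both parts of the corollary follow essentially as a consequence of the calculations already carried out.
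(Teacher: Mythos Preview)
Your proposal is correct and follows exactly the approach the paper indicates: the authors do not give an explicit proof but state that ``the proofs of these results are similar'' to Theorems \ref{thm:reg_prof}--\ref{thm:reg_prof_slow} and leave them to the reader, which is precisely what you carry out---replacing the two-sided bound \eqref{eq:gsest} by \eqref{eq:gsest_decay_1} in part (1) (so the profiles $g^{\uppp}, g^{\low}$ drop out and only $f$ remains), and using the one-sided bound \eqref{eq:gsest_decay_2} together with $\varepsilon \downarrow 0$ in part (2). The only step you have compressed in part (2) is the asymptotic evaluation of the inner radial integral $\int_{\log r}^{\infty} e^{-as}s^{d-1}\,ds \asymp (\log r)^{d-1}r^{-a}$, but this is routine and does not affect the argument.
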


\begin{corollary}[\textbf{Slowly varying L\'evy intensities}]
\label{cor:reg_prof_slow_dec}
Let Assumptions \ref{ass:assnu} and \ref{ass:pots_decay} hold and suppose that there exist $\gamma \in [d,\infty)$, $l \in \N$
and $\beta_1,...,\beta_l \in \R$ such that
\begin{align}
\label{eq:fluctcond_1_slow_dec}
f(r) \asymp r^{-\gamma}(\log r)^{\beta_1} (\log_2 r)^{\beta_2} \cdots (\log_l r)^{\beta_l}
\quad \text{as} \quad r \to \infty.
\end{align}
For natural numbers $k \geq l$ and any $\delta>0$ denote
$$
\tau_k(r):= r^{\frac{1}{2\gamma-d}} \left((\log r)^{2\theta_1+1} (\log_2 r)^{2\theta_2+1} \cdots
(\log_k r)^{2\theta_k+\delta}\right)^{\frac{1}{2\gamma-d}},
$$
where $\theta_i = \beta_i$ for $1 \leq i \leq l$ and $\theta_i = 0$ for $l < i \leq k$, whenever $k >l$.
Then for every $k \geq l$ we have $\widetilde \pr$-almost surely
 \begin{align}
 \label{eq:res1_slow}
 \limsup_{n \to \infty} \frac{|\widetilde X_n|}{\tau_k(n)} = \left\{
  \begin{array}{ccc}
    0 & \quad \text{\rm if} \quad  \delta > 1,\\
    \infty & \quad \text{\rm if} \quad \delta \leq 1.
  \end{array}\right.
 \end{align}
\end{corollary}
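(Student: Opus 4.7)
The plan is to mirror the proof of Theorem \ref{thm:reg_prof_slow}, using the simplification that under Assumption \ref{ass:pots_decay} together with the estimate \eqref{eq:gsest_decay_1} we have $\varphi_0(x)^2 \asymp (1 \wedge f(|x|))^2$ at infinity, so the weight $g^{\low}, g^{\uppp}$ featuring in the confining case drops out entirely. In other words, the role played there by $F_i(r) = (f(r)g_i(r))^2 r^{d-1}$ is now played by the single function $F(r) := f^2(r)\, r^{d-1}$, and thanks to \eqref{eq:gsest_decay_1} upper and lower bounds on $\varphi_0^2$ differ only by multiplicative constants. Therefore I would directly invoke Corollary \ref{cor:int_test_1} (or equivalently Corollary \ref{cor:LIL_reg_var_dec}) and reduce everything to an analysis of the integral
\[
J(c,\tau) \;=\; \int_{R}^{\infty} dr \int_{|x|\ge c\tau(r)} f^2(|x|)\, dx,
\]
where $\tau$ is an increasing test profile.

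First I would introduce the candidate inverse
\[
\vartheta(r) \;:=\; r^{2\gamma-d} (\log r)^{-2\theta_1-1} (\log_2 r)^{-2\theta_2-1} \cdots (\log_k r)^{-2\theta_k-\delta}
\]
for $r$ past the $k$-fold iterated exponential, check that $\vartheta$ is eventually strictly increasing (straightforward from differentiating the logarithm), and verify via the asymptotic uniqueness of conjugate slowly varying functions (as used in the proof of Theorem \ref{thm:reg_prof_slow}, via Remark \ref{rem:rem1}) that $\vartheta^{-1}(r) \approx \tau_k(r)$. Passing to polar coordinates in the inner integral and applying Fubini then gives
\[
J(c,\vartheta^{-1}) \;\asymp\; \int_{R}^{\infty} \vartheta(r/c)\, F(r)\, dr,
\]
exactly as in the derivation of $I^{(i)}_{\nu,V}(c,\vartheta^{-1})$ in Theorem \ref{thm:reg_prof_slow}.

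Plugging in the hypothesis \eqref{eq:fluctcond_1_slow_dec} to read $F(r) \asymp r^{d-1-2\gamma}(\log r)^{2\beta_1}\cdots(\log_l r)^{2\beta_l}$, and using slow variation of each iterated logarithm to absorb the factor $c$, the integrand satisfies
\[
\vartheta(r/c)\, F(r) \;\asymp\; r^{-1}(\log r)^{-1}(\log_2 r)^{-1} \cdots (\log_{k-1} r)^{-1}(\log_k r)^{-\delta}
\]
for every fixed $c>0$ and all sufficiently large $r$, where the choice $\theta_i=\beta_i$ for $i\le l$ and $\theta_i=0$ for $l<i\le k$ is precisely what produces the cancellations in the exponents of the first $k-1$ logarithms. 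By the standard Bertrand-type criterion this integral converges iff $\delta>1$, and the resulting conclusion is independent of $c>0$. Hence $c_{\varphi_0}(\vartheta^{-1})=0$ when $\delta>1$ and $c_{\varphi_0}(\vartheta^{-1})=\infty$ when $\delta\le 1$, and Corollary \ref{cor:int_test_1} combined with $\vartheta^{-1} \approx \tau_k$ yields \eqref{eq:res1_slow}.

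The main obstacle is the bookkeeping in the second step: one has to verify that $\vartheta^{-1}(r) \approx \tau_k(r)$ and that the multiplicative $c$ really disappears at the level of each iterated logarithm. Both reduce to careful, repeated use of the fact that $\log_j(r/c) = \log_j r + o(1)$ (for $j\ge 2$) combined with the uniqueness of the conjugate slowly varying function, exactly as handled in Remark \ref{rem:rem1}; once this is in place, the rest of the argument is a transcription of the confining case with the single change $F_i \rightsquigarrow F$.
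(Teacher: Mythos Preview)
Your proposal is correct and follows exactly the approach the paper intends: the paper does not give an explicit proof of this corollary but states that it is obtained by the same argument as Theorem~\ref{thm:reg_prof_slow}, and your write-up is precisely that transcription with $F_i \rightsquigarrow F = f^2 r^{d-1}$. One small point worth making explicit is that the two-sided bound \eqref{eq:gsest_decay_1} you invoke is indeed available here, since the hypothesis \eqref{eq:fluctcond_1_slow_dec} forces $f$ to decay polynomially (hence strictly sub-exponentially), placing you in case (1) of the discussion preceding Theorem~\ref{thm:LIL_reg_var_dec}.
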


\section{\rm \textbf{Examples }} \label{sec:Examples}
Now we discuss the asymptotic behaviour of paths of ground state-transformed processes with underlying L\'evy processes
having absolutely continuous L\'evy measures with densities $\nu(x) \asymp f(|x|)$ such that
\begin{align} \label{eq:ex_L_dens}
f(r) = \1_{(0,1]}(r) \, r^{-d-\alpha} + \1_{(1,\infty)}(r) \, e^{-\mu r^{\beta}} r^{-\gamma}, \quad r >0,
\end{align}
where $d \geq 1$, $\alpha \in (0,2)$, $\mu \geq 0$, $\beta \geq 0$ and $\gamma \geq 0$. As proven in \cite[Prop. 2]{KS14},
condition \eqref{eq:ass2} holds exactly in three cases:
\begin{itemize}
\item[(L1)] $\mu =0$ and $\gamma > d$
\item[(L2)] $\mu > 0$, $\beta \in (0,1)$ and $\gamma \geq 0$
\item[(L3)] $\mu >0$, $\beta= 1$ and $\gamma > (d+1)/2$.
\end{itemize}
All the other assumptions hold as well. Notice that this choice of the profile $f$ leads naturally to the following important classes
of the underlying L\'evy processes. In particular, (L1) includes the \emph{isotropic $\alpha$-stable processes} ($\gamma = d+\alpha$)
and \emph{layered $\alpha$-stable processes} ($\gamma > d+\alpha$), and (L3) includes \emph{relativistic $\alpha$-stable processes}
($\mu = m^{1/\alpha}$, $\gamma = (d+1+\alpha)/2$, for $m >0$) and \emph{tempered stable processes} ($\mu >0$, $\gamma = d+\alpha$).

First we consider confining potentials $V(x) \asymp g(|x|)$ with
\begin{align} \label{eq:ex_V}
g(r) = e^{\eta r^{\vartheta}} r^{\rho} \log(1+r)^{\sigma}, \quad r \geq 0,
\end{align}
where $\eta, \vartheta, \rho, \sigma \geq 0$ are chosen in a way that $g(r) \to \infty$ as $r \to \infty$.
Observe that with this choice of $f$ and $g$ the integral tests in Theorem \ref{thm:LIL_reg_var} hold with
$I^{\uppp}_{\nu,V,\kappa}(1,\tau) = I^{\low}_{\nu,V,\kappa}(1,\tau) = I(1,\tau)$, where
$$
I(1,\tau) := \int^{\infty} \frac{e^{-2\mu \tau(r)^{\beta} - 2\eta \tau(r)^{\vartheta}}
\tau(r)^{d-2(\gamma+\rho)}}{\log(1+\tau(r))^{\sigma} \kappa(\tau(r))} \, dr
$$
and $\kappa(r) = r^{\beta \vee \vartheta}$ if $\mu, \beta >0$, or $\eta, \vartheta >0$ and $\kappa(r) \equiv {\rm const}$ otherwise.

Moreover, the following illustrates the facts established in Corollary \ref{cor:LIL_reg_var} and Theorems
\ref{thm:reg_prof}-\ref{thm:reg_prof_slow}, highlighting the parameters of $\nu$ and $V$ that determine the long time behaviour.

\begin{example} \label{ex:ex_last_1}
{\rm
Suppose the profiles $f$ and $g$ for the density of the L\'evy measure $\nu$ and the potential $V$
are given by \eqref{eq:ex_L_dens} and \eqref{eq:ex_V}, respectively. Then we have the following.
\begin{itemize}
\item[(1)]
\texttt{Stretched exponential and exponential jump intensity:}  Let (L2) or (L3) hold.
\begin{itemize}
\item[(1.1)]
If $\eta>0$ and $\vartheta>\beta$, then
$$
\limsup_{n \to \infty} \frac{|\widetilde X_n|}{(\log n)^{1/\vartheta}} = \frac{1}{(2\eta)^{1/\vartheta}},
\quad \widetilde\pr - a.s.
$$
\item[(1.2)]
If $\eta>0$ and $\vartheta=\beta$, then
$$
\limsup_{n \to \infty} \frac{|\widetilde X_n|}{(\log n)^{1/\vartheta}} = \frac{1}{(2(\mu+\eta))^{1/\vartheta}},
\quad \widetilde\pr - a.s.
$$
\item[(1.3)]
If $\eta>0$ and $\beta>\vartheta$ or $\eta=0$, then
$$
\limsup_{n \to \infty} \frac{|\widetilde X_n|}{(\log n)^{1/\beta}} = \frac{1}{(2\mu)^{1/\beta}},
\quad \widetilde\pr - a.s.
$$
\end{itemize}

\vspace{0.1cm}
\item[(2)]
\texttt{Polynomial jump intensity:}  Let (L1) hold.
\begin{itemize}
\item[(2.1)]
If $\eta, \vartheta>0$, then
$$
\limsup_{n \to \infty} \frac{|\widetilde X_n|}{(\log n)^{1/\vartheta}} = \frac{1}{(2\eta)^{1/\vartheta}},
\quad \widetilde\pr - a.s.
$$
\item[(2.2)]
If $\eta=0$ and $\rho \geq 0$, then
$$
\limsup_{n \to \infty} \frac{|\widetilde X_n|}{\left(n (\log n)^{2\sigma+\delta}\right)^{\frac{1}{2(\gamma+\rho)+d}}} =
\left\{
  \begin{array}{ccc}
    0 & \quad \text{if} \quad  \delta > 1,\\
    \infty & \quad \text{if} \quad \delta \leq 1,
  \end{array}\right.
\quad \widetilde\pr - a.s.
$$
\end{itemize}
\end{itemize}
}
\end{example}
\noindent
Note that Example \ref{ex:ex_last_1} (2.2) applies directly to the fractional GST-processes related to $H=
(-\Delta)^{\alpha/2}+V$, where $\alpha \in (0,2)$ and $V$ is a confining potential from Example \ref{confipot}.
If $V(x)=|x|^{2n}$, $n \in \N$ (harmonic and anharmonic oscillators), then this result holds with $\gamma = d+
\alpha$, $\rho = 2n$ and $\sigma=0$. If $V$ is a double or multiple potential well, then a similar result holds
with a suitable $\rho$.

Next we illustrate our results obtained in Section \ref{sec:decaying} for decaying potentials. Corollaries
\ref{cor:reg_prof_dec}-\ref{cor:reg_prof_slow_dec} imply the following.

\begin{example} \label{ex:ex_last_2}
{\rm
Suppose the profile $f$ for the density of the L\'evy measure $\nu$ is given by \eqref{eq:ex_L_dens}, and $V$ is a
decaying potential such that Assumption \ref{ass:pots_decay} holds. Then we have the following.
\begin{itemize}
\item[(1)] If (L1) holds, then
$$
\limsup_{n \to \infty} \frac{|\widetilde X_n|}{\left(n (\log n)^{\delta}\right)^{\frac{1}{2\gamma+d}}} = \left\{
  \begin{array}{ccc}
    0 & \quad \text{if} \quad  \delta > 1,\\
    \infty & \quad \text{if} \quad \delta \leq 1,
  \end{array}\right.
\quad \widetilde\pr - a.s.
$$
\item[(2)] If (L2) holds, then
$$
\limsup_{n \to \infty} \frac{|\widetilde X_n|}{(\log n)^{1/\beta}} = \frac{1}{(2\mu)^{1/\beta}},
\quad \widetilde\pr - a.s.
$$
\item[(3)] If (L3) holds and the ground state eigenvalue $\lambda_0<0$ is sufficiently low-lying
(so that \eqref{eq:gsest_decay_1} holds), then
$$
\limsup_{n \to \infty} \frac{|\widetilde X_n|}{\log n} = \frac{1}{2\mu},
\quad \widetilde\pr - a.s.
$$
\item[(4)] If (L3) holds and the ground state eigenvalue $\lambda_0<0$ is close to zero
(so that \eqref{eq:gsest_decay_2} holds), then
 \begin{align*}
 \limsup_{n \to \infty} \frac{|\widetilde X_n|}{\log n}
 \geq \frac{1}{2 \theta \sqrt{|\lambda_0|}}, \quad \widetilde \pr - \text{a.s.}
 \end{align*}
\end{itemize}
}
\end{example}
\noindent
Recall that some classes of decaying potentials of special importance are listed in Example \ref{decaypot}, to which
these results can be applied.

Interestingly, our general results obtained in Section \ref{sec:general} apply directly to diffusive GST-proceses as well.
Indeed, in many cases the behaviour of the ground state $\varphi_0$ at infinity is known explicitly and we can analyze
the test integrals $I_{\varphi_0}(1,\tau)$ by similar methods as in Sections \ref{sec:confining}-\ref{sec:confining_regular}.
For instance, this can be done for some of the GST-Brownian motions.
Below we give the limsup-almost sure behaviour profiles for the two important models discussed in Example \ref{GST-BM}.
The details are left to the reader.

\begin{example}[\textbf{Almost sure behaviour profiles for GST Brownian motion}]
{\rm
\noindent
\begin{itemize}
\item[(1)]
\emph{Ornstein-Uhlenbeck process:} If $(\widetilde X_t)_{t \geq 0}$ is a GST-process described in Example \ref{GST-BM} (1),
then
$$
\limsup_{n \to \infty} \frac{|\widetilde X_n|}{\sqrt{\log n}} = \frac{1}{\sqrt{\gamma}},
\quad \widetilde\pr - {\rm a.s.}
$$
This result is well-known, and reproduced by our results above.
\vspace{0.1cm}
\item[(2)]
\emph{Brownian motion in a finite potential well:} If $(\widetilde X_t)_{t \geq 0}$ is a GST-process described in Example
\ref{GST-BM} (2), then
$$
\limsup_{n \to \infty} \frac{|\widetilde X_n|}{\log n} = \frac{1}{2\sqrt{2 |\lambda_0|}},
\quad \widetilde\pr - {\rm a.s.}
$$
Note that the almost sure asymptotics for this model is close to that obtained for the jump type GST processes constructed
for decaying potentials in the case when the ground state eigenvalue $\lambda_0$ is close to zero (cf. Example \ref{ex:ex_last_2}
(4)).
\end{itemize}
}
\end{example}

\bigskip
\noindent
\textbf{Acknowledgments:} JL is pleased to thank Nikolai Leonenko for supplying reference \cite{ALS}.


\begin{thebibliography}{00}
\bibitem{A1}
J.A.D. Appleby, H. Wu: Solutions of stochastic differential equations obeying the law of the iterated logarithm,
with applications to financial markets, \emph{Electronic J. Probab.} \textbf{14}, 912-959, 2009

\bibitem{A2}
J.A.D. Appleby, X. Mao, H.Z. Wu: On the size of the largest pathwise deviations of stochastic functional differential
equations, \emph{Nonlinear Stud.}, to appear

\bibitem{ALS}
F. Avram, N.N. Leonenko and N. \v{S}uvak: On spectral analysis of heavy-tailed Kolmogorov-Pearson diffusions,
\emph{Markov Proc. Relat. Fields} \textbf{19}, 249-298, 2013

\bibitem{B}
J. Bertoin: \emph{L\'evy Processes}, Cambridge University Press, 1996

\bibitem{BL}
V. Betz, J. L\H{o}rinczi: Uniqueness of Gibbs measures relative to Brownian motion,
\emph{Ann. I. H. Poincar\'e} \textbf{39}, 877-889, 2003

\bibitem{B}
L. Breiman: A delicate law of the iterated logarithm for non-decreasing stable processes,
\emph{Ann. Math. Stat.} \textbf{39}, 1814-1824, 1968; errata \textbf{41}, 1126, 1970

\bibitem{CMS}
R. Carmona, W.C. Masters, B. Simon:
Relativistic Schr\"odinger operators: asymptotic behaviour of the eigenfunctions, \emph{J. Funct. Anal.}
\textbf{91}, 117-142, 1990

\bibitem{CsR}
M. Cs\"org\H{o}, P. R\'ev\'esz: \emph{Strong Approximations in Probability and Statistics},
Academic Press, 1981

\bibitem{DS}
E.B. Davies, B. Simon: Ultracontractivity and heat kernels for Schr\"odingers operators and
Dirichlet Laplacians, \emph{J. Funct. Anal.} \textbf{59}, 335--395, 1984

\bibitem{DL}
S.O. Durugo and J. L\H orinczi:
Spectral properties of the massless relativistic quartic oscillator, \emph{J. Diff. Equations}
\textbf{264}, 3775-3809, 2018

\bibitem{Fr}
D. Freedman: \emph{Brownian Motion and Diffusion}, Holden-Day, 1971, 2nd ed. Springer, 1982

\bibitem{F}
B. Fristedt: Sample functions of stochastic processes with stationary independent increments, in:
\emph{Advances in Probability}, vol. 3, P. Ney, S. Port (eds.) pp. 241-396, Marcel Dekker, 1974.

\bibitem{G1}
S. Gheryani, F. Hiroshima, J. L\H{o}rinczi, A. Majid and H. Ouerdiane:
$P(\phi)_{1}$-process for the spin-boson model and a functional central limit theorem for associated additive
functionals, \emph{Stochastics} \textbf{89}, 1104-1115, 2017

\bibitem{G2}
S. Gheryani, F. Hiroshima, J. L\H{o}rinczi, A. Majid and H. Ouerdiane:
Functional central limit theorems and $P(\phi)_{1}$-processes for the classical and relativistic Nelson models,
arXiv:1605.01791, 2016 (under review)


\bibitem{G}
B.V. Gnedenko:
O roste odnorodnykh sluchainykh protsessov s nezavisimymi prirashcheniyami (Sur la croissance des processus
stochastiques homog\`enes \`a accroissements ind\'ependants; in Russian with a summary in French),
\emph{Izv. Akad. Nauk SSSR Ser. Mat.} \textbf{7}, 89-110, 1943

\bibitem{KL12}
K. Kaleta, J. L\H{o}rinczi:
Fractional $P(\phi)_1$-processes and Gibbs measures,\emph{ Stoch. Proc. Appl.} \textbf{122}, 3580-3617,
2012

\bibitem{KL15}
K. Kaleta, J. L\H{o}rinczi:
Pointwise estimates of the eigenfunctions and intrinsic ultracontractivity-type properties of Feynman-Kac
semigroups for a class of L\'evy processes, \emph{Ann. Probab.} \textbf{43}, 1350-1398, 2015

\bibitem{KL17}
K. Kaleta, J. L\H{o}rinczi:
Fall-off of eigenfunctions for non-local Schr\"odinger operators with decaying potentials, \emph{Potential Anal.}
\textbf{46}, 647-688, 2017

\bibitem{KSch18}
K. Kaleta, R.L. Schilling:
Progressive intrinsic ultracontractivity and heat kernel estimates for non-local Schr\"odinger operators, preprint 2018.

\bibitem{KS14}
K. Kaleta, P. Sztonyk:
Small time sharp bounds for kernels of convolution semigroups, \emph{J. Anal. Math.} \textbf{132}, 355-394, 2017

\bibitem{K}
S. Keprta: \emph{Integral Tests for Brownian Motions and Some Related Processes}, PhD thesis, Carleton
University, Ottawa, 1997

\bibitem{Kha}
A. Ya. Khintchine: \emph{Asymptotische Gesetze der Wahrscheinlichkeitsrechnung}, Kap. 5, Springer, 1933

\bibitem{Khb}
A. Ya. Khintchine: Dvo teoremy o stokhasticheskikh protssesakh s odnotipnymi prirashchenyami (Zwei S\"atze \"uber
stochastische Prozesse mit stabilen Verteilungen; in Russian with a summary in German), \emph{Mat. Sbornik}
\textbf{3}, 577-584, 1938

\bibitem{K}
F. K\"uhn:
L\'evy-Type Processes: Moments, Construction and Heat Kernel Estimates, L\'evy Matters vol. VI, Lecture
Notes in Mathematics \textbf{2187}, Springer, 2017

\bibitem{LM}
J. L\H{o}rinczi, J. Ma{\l}ecki:
Spectral properties of the massless relativistic harmonic oscillator, \emph{J. Diff. Equations} \textbf{253},
2846-2871, 2012

\bibitem{LHB11}
J. L\H{o}rinczi, F. Hiroshima, V. Betz: \emph{Feynman-Kac-Type Theorems and Gibbs Measures on Path Space. With
Applications to Rigorous Quantum Field Theory}, de Gruyter Studies in Mathematics \textbf{34}, Walter de Gruyter,
2011, 2nd ed. 2018/19

\bibitem{LY}
J. L\H orinczi and X. Yang:
Multifractal properties of sample paths of ground state-transformed jump processes, arXiv:1705.00551, 2017
(under review)

\bibitem{Ma}
X. Mao: \emph{Stochastic Differential Equations and Applications}, Horwood Publishing, 2nd ed. 2008

\bibitem{Mo}
M. Motoo: Proof of the law of iterated logarithm through diffusion equation, \emph{Ann. Inst. Statist. Math.}
\textbf{10}, 21-28, 1958

\bibitem{P}
I. Petrowsky: Zur ersten Randwertaufgabe der W\"armeleitungsgleichung, \emph{Comp. Math.} \textbf{1}, 383-419,
1935

\bibitem{PT}
W.E. Pruitt, S.J. Taylor: Sample path properties of processes with stable components, \emph{Z. Wahrsch. verw. Gebiete}
\textbf{12}, 267-289, 1969

\bibitem{ReS}
M. Reed, B. Simon: \emph{Methods of Modern Mathematical Physics, 1. Functional Analysis}, Academic Press, 1980

\bibitem{R}
P. R\'ev\'esz: \emph{Random Walk in Random and Non-Random Environments}, World Scientific, 1990, 2nd ed. 2005


\bibitem{RS}
J. Rosen, B. Simon: Fluctuations in $P(\phi)_1$-processes. {\it Ann. Probab.} {\bf 4}, 155-174, 1976


\bibitem{Sa}
N. Sandri\'c:
Long-time behavior for a class of Feller processes, \emph{Trans. Amer. Math. Soc.} \textbf{368},
1871-1910, 2016

\bibitem{S}
K. Sato: \emph{L{\'e}vy Processes and Infinitely Divisible Distributions}, Cambridge University Press, 1999

\bibitem{Sch}
R. Schilling:
Growth and H\"older conditions for the sample paths of Feller processes, \emph{Probab. Theory Relat. Fields}
\textbf{112}, 565-611, 1998


\bibitem{bib:ESen}
E. Seneta:
\emph{Regularly Varying Functions}, Lecture Notes in Mathematics \textbf{508}, Springer, ...

\bibitem{Sh}
Y. Shiozawa: Escape rate of symmetric jump-diffusion processes, \emph{Trans. Amer. Math. Soc.} \textbf{368},
7645-7680, 2016

\bibitem{SW}
Y. Shiozawa, J. Wang:
Rate functions for symmetric Markov processes via heat kernel, \emph{Potential Anal.} \textbf{46}, 23-53, 2017


\bibitem{S2}
B. Simon: \emph{Functional Integration and Quantum Physics}, 2nd ed., AMS Chelsea Publishing, 2004

\bibitem{ShN}
T. Sirao, T. Nisida: On some asymptotic properties concerning Brownian motion, \emph{Nagoya Math. J.}
\textbf{4}, 97-101, 1952

\bibitem{Sh}
T. Sirao: On some asymptotic properties concerning homogenous differential processes, \emph{Nagoya Math. J.}
\textbf{6}, 95-107, 1953

\bibitem{Z}
V.M. Zolotarev: Analog of the iterated logarithm law for semi-continuous stable processes, \emph{Theory Probab.
Appl.} \textbf{9}, 512-513, 1964

\end{thebibliography}
\end{document}